\newcommand{\A}{{\mathbb{A}}}
\newcommand{\C}{{\mathbb{C}}}
\newcommand{\Lee}{\mathbb{L}}
\newcommand{\Qell}{{\mathbb{Q}}_{\ell}}
\newcommand{\Pro}{{\mathbb{P}}}
\newcommand{\Z}{{\mathbb{Z}}}
\newcommand{\Zl}{{\mathbb{Z}}/\ell}
\newcommand{\Zell}{{\mathbb{Z}}_{\ell}}
\newcommand{\car}{\mathrm{char}\;}
\newcommand{\clH}{\mathrm{cl}_{H}}
\newcommand{\clMU}{\mathrm{cl}_{MU}}
\newcommand{\et}{\mathrm{\acute{e}t}}
\newcommand{\Et}{\mathrm{Et}\,}
\newcommand{\hEt}{\hat{\mathrm{Et}}\,}
\newcommand{\compl}{\hat{(\cdot)}}
\newcommand{\Gal}{\mathrm{Gal}}
\newcommand{\Hom}{\mathrm{Hom}}
\newcommand{\sk}{\mathrm{sk}}
\newcommand{\Sm}{\mathrm{Sm}}
\newcommand{\Hh}{{\mathcal H}}
\newcommand{\hHh}{\hat{{\mathcal H}}}
\newcommand{\hHhp}{\hat{{\mathcal H}}_{\ast}}
\newcommand{\Oh}{{\mathcal O}}
\newcommand{\Ph}{\mathcal{P}}
\newcommand{\Sh}{{\mathcal S}}
\newcommand{\SHh}{{\mathcal{SH}}}
\newcommand{\hSHh}{{\hat{\mathcal{SH}}}}
\newcommand{\hSh}{\hat{\mathcal S}}
\newcommand{\hShp}{\hat{\mathcal S}_{\ast}}
\newcommand{\hSp}{\mathrm{Sp}(\hShp)}
\newcommand{\Sp}{\mathrm{Sp}(\Sh_{\ast})}
\newcommand{\ok}{\bar{k}}
\newcommand{\oW}{\Bar{W}}
\newcommand{\oX}{\Bar{X}}
\newcommand{\oY}{\Bar{Y}}
\newcommand{\ovZ}{\Bar{Z}}
\newcommand{\hMU}{\hat{M}U}
\newcommand{\hBP}{\hat{B}P}
\newcommand{\into}{\hookrightarrow}
\newtheorem{theorem}{Theorem}[section]
\newtheorem{lemma}[theorem]{Lemma}
\newtheorem{prop}[theorem]{Proposition}
\newtheorem{cor}[theorem]{Corollary}
\newtheorem{convention}[theorem]{Convention}
\newtheorem{remark}[theorem]{Remark}
\begin{document}
\title{Torsion algebraic cycles and \'etale cobordism}
\author{Gereon Quick}
\date{}
\begin{abstract}
We prove that the classical integral cycle class map from algebraic cycles to \'etale cohomology factors through a quotient of $\ell$-adic \'etale cobordism over an algebraically closed field of positive characteristic. This shows that there is a strong topological obstruction for cohomology classes to be algebraic and that examples of Atiyah, Hirzebruch and Totaro also work in positive characteristic. 
\end{abstract}

\maketitle

\section{Introduction}

Atiyah and Hirzebruch \cite{atiyahhirzebruch} showed that an integral cohomology class of a complex variety $X$ has to satisfy certain conditions in order to be algebraic. If a cohomology class $y$ in $H^{\ast}(X;\Z)$ is algebraic, all differentials $d_ry$ in the spectral sequence $H^{\ast}(X;\Z)\Rightarrow K^{\ast}(X)$ to topological $K$-theory vanish, or, in other terms, all primary odd degree cohomology operations vanish on $y$. Moreover, they showed that these conditions are not vacuous by constructing examples using Godeaux-Serre varieties. Therefore, they showed that the integral version of the Hodge conjecture for complex varieties fails in general. Recently, Totaro \cite{totaro} revisited the obstructions of Atiyah and Hirzebruch and showed that they are induced by a stronger condition. Totaro proved that the classical map from cycles on $X$ to integral cohomology factors through some quotient of complex cobordism as
$$CH^{\ast}X\to MU^{2\ast}X\otimes_{MU^{\ast}}\Z \stackrel{\theta}{\to} H^{2\ast}(X;\Z).$$
Hence for an integral even degree cohomology class to be algebraic, it has to be in the image of the canonical map $\theta:MU^{2\ast}X\otimes_{MU^{\ast}}\Z \to H^{2\ast}(X;\Z)$. Since also all higher order odd degree cohomology operations vanish on the image of $\theta$, this obstruction is stronger than the one of \cite{atiyahhirzebruch}. Moreover, Totaro showed that it provides a method to construct nontrivial cycles in the Griffiths group of certain varieties. Voisin discusses this topological obstruction and other constructions of counter-examples for the integral Hodge conjecture in \cite{voisin}.\\
Over a finite field, the analogue of the Hodge conjecture is the Tate conjecture. Let $k$ be a finite field, $\ok$ its algebraic closure and $G:=\Gal(\ok/k)$ the absolute Galois group of $k$. Let $\ell$ be a prime different from the characteristic of $k$. For a projective smooth and geometrically integral variety $X$ over $k$, let us consider the integral version of the Tate conjecture and ask if the homomorphism
\begin{equation}\label{tateintegral}
CH^iX \otimes_{\Z} \Zell \to H_{\et}^{2i}(X_{\ok};\Zell(i))^G
\end{equation}
is surjective. As for the integral Hodge conjecture, this map (\ref{tateintegral}) is in general not surjective. For any algebraically closed field $k$ and a projective smooth variety $X$ over $k$, Colliot-Th\'el\`ene, Szamuely and Totaro \cite{ctsz} have shown that, for any prime $\ell \neq \car k$, all primary odd degree Steenrod operations vanish on algebraic cohomology classes in $H_{\et}^{2i}(X;\Z/\ell(i))$. Since Godeaux-Serre varieties are defined over any infinite field, this shows moreover that the initial examples of Atiyah and Hirzebruch \cite{atiyahhirzebruch} yield examples over any algebraically closed field for which the map 
$$CH^iX \otimes_{\Z} \Zell \to H_{\et}^{2i}(X;\Zell(i))$$ 
is not surjective, cf. \cite{ctsz}, Theorem 2.1.\\
The main result of this paper is that in fact Totaro's stronger obstruction has an analogue over an algebraically closed of arbitrary characteristic. Once the necessary theory is developed, the proof is similar to the one of Totaro's theorem in \cite{totaro}. But there are two main new highly nontrivial ingredients. First, we have to replace the usage of the analytic topology on complex varieties. Its natural analogue over fields of positive characteristic is the \'etale topology. By Artin, Mazur \cite{artinmazur} and Friedlander \cite{fried}, the information of the \'etale topology can be collected in a functor that associates to any locally noetherian scheme an \'etale homotopy type. For a fixed prime number $\ell$ different from the characteristic of the base field, we will use an $\ell$-adically completed version of this functor. Substituting the simplicial spectrum representing complex cobordism by an $\ell$-adic completion $\hMU$, we get the $\ell$-adically completed cobordism $\hMU_{\et}^{\ast}(X)$ of the \'etale homotopy type of $X$. The key point is then to construct a fundamental \'etale bordism class $[X]\in \hMU^{\et}_{2n}(X)$ for any smooth projective variety $X$ of dimension $n$. This involves the techniques developed in \cite{etalecob} and a new Poincar\'e duality theorem for \'etale bordism. To use this $\ell$-adically completed \'etale topological type as in \cite{etalecob} and not the usual pro-simplicial set is a significant progress in applying \'etale topological cohomology theories, as it yields simplified constructions and makes it possible to apply results from $\A^1$-homotopy theory, see \cite{etalecob} and the third section below. Furthermore, the proof of Theorem \ref{maintheoremintro} below, will demonstrate and use the full power of the analogy to the complex topological theory provided by the \'etale topological techniques of \cite{fried} and \cite{etalecob}, further developed in the third section.\\
Secondly, for a complex variety X, the class of a prime cycle, i.e. an irreducible subvariety $Z\subset X$, is defined to be the cobordism class of a resolution of $Z$. Over a field of positive characteristic, resolutions of singularities are not available. Therefore, we replace resolutions by smooth alterations. An alteration of a variety $X$ over a field $k$ is a proper dominant morphism $X' \to X$ of varieties over $k$ with $\dim X' =\dim X$. This is a weaker notion than a resolution since finite extensions of the function field are allowed. The existence of alterations such that $X'$ is smooth over the base field is part of the famous theorem of de Jong \cite{dejong}. But this result alone would not help us, since it does not provide any control of the degree of the alteration. In his recent studies on finiteness in \'etale cohomology and uniformizations \cite{gabber} \cite{illusie}, Gabber also proved that, for any prime $\ell$ different from the characteristic of the base field, there exists a smooth alteration of degree prime to $\ell$. Since $\hMU_{\et}^{\ast}(X)$ is a module over the coefficient ring $\hMU^{\ast}=MU^{\ast}\otimes_{\Z}\Zell$, the degree becomes invertible in $\ell$-adic cobordism. After constructing a fundamental \'etale cobordism class for any smooth projective variety over $k$, we get a well defined map by sending an irreducible subvariety $Z\subset X$ to the image of the fundamental class of an alteration $\pi:Z'\to Z$ in $\hMU^{\ast}(X)$ divided by the degree of $\pi$. The use of Gabber's theorem is exactly what is needed to get an $\ell$-adic integral version. Usually, replacing resolutions by alterations forces to switch to rational coefficients, because of the occuring nontrivial degrees of the maps. But as we explain below, the discovered topological obstruction is a torsion phenomenon and would vanish for rational coefficients. Hence to use Gabber's and not only de Jong's theorem is a key new idea for the proof.\\
This yields the following factorization of the cycle map in arbitrary characteristic.  
\begin{theorem}\label{maintheoremintro}
Let $k$ be an algebraically closed field and let $X$ be a smooth projective variety over $k$. Let $\ell$ be a prime different from the characteristic of $k$. There is a natural map $\clMU:Z^iX \to \hMU_{\et}^{2i}(X)\otimes_{MU^{\ast}} \Zell$ from codimension $i$ cycles on $X$ that vanishes on cycles algebraically equivalent to zero such that the composition
$$CH^{\ast} X \stackrel{\clMU}{\longrightarrow} \hMU_{\et}^{2\ast}(X)\otimes_{\hMU^{\ast}} \Zell \stackrel{\theta}{\longrightarrow} H_{\et}^{2\ast}(X;\Zell)$$
is the classical cycle map $\clH$ to continuos $\ell$-adic cohomology after choosing an isomorphism $\Zell\cong \Zell(1)$. This map $\clMU$ is compatible with pushforward maps for projective morphisms and commutes with intersection products.
\end{theorem}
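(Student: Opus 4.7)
The plan is to mirror Totaro's argument in \cite{totaro}, with smooth alterations and $\ell$-adic \'etale cobordism replacing resolutions and complex cobordism. For an irreducible subvariety $Z \subset X$ of codimension $i$, I apply Gabber's refinement of de Jong's theorem \cite{gabber} to obtain a smooth alteration $\pi\colon Z' \to Z$ of degree $d$ prime to $\ell$. Let $f\colon Z' \to X$ be the composition of $\pi$ with the closed immersion $Z \hookrightarrow X$. Using the fundamental \'etale bordism class $[Z'] \in \hMU^{\et}_{2(n-i)}(Z')$ constructed earlier via the new Poincar\'e duality for \'etale bordism (where $n = \dim X$), define
\[
\clMU(Z) := \tfrac{1}{d}\, f_{\ast}\bigl([Z']\bigr) \in \hMU_{\et}^{2i}(X) \otimes_{\hMU^{\ast}} \Zell,
\]
where $f_{\ast}$ is the proper pushforward on \'etale bordism and Poincar\'e duality on $X$ identifies bordism in degree $2(n-i)$ with cobordism in degree $2i$. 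The factor $1/d$ is legitimate because $d$ is a unit in $\Zell$. Extend linearly to $Z^i X$.

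For well-definedness I would compare two smooth alterations $\pi_j\colon Z_j \to Z$ ($j=1,2$) of $\ell$-prime degree by applying Gabber's theorem to an irreducible component of $Z_1 \times_Z Z_2$, producing a smooth alteration $W$ of $\ell$-prime degree dominating both. Functoriality of $f_{\ast}$ in $\hMU_{\et}$ and multiplicativity of degrees then force the two constructions to agree. Compatibility with a projective pushforward $g\colon X \to Y$ follows analogously from $g_{\ast}f_{\ast} = (g\circ f)_{\ast}$: in the generically finite case the degree factors combine correctly, and in the dimension-dropping case both sides vanish since $g_{\ast}[Z'] = 0$ in the appropriate bordism group. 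For vanishing on algebraic equivalence, given a relative cycle $\mathcal{Z} \subset T \times X$ over a smooth connected curve $T$ with fibers $Z_0, Z_1$ above two $k$-points, the invariance properties of $\hMU_{\et}$ developed in the third section together with a relative fundamental class for $\mathcal{Z}$ imply that any two $k$-points of $T$ induce the same restriction of $\clMU(\mathcal{Z})$, so $\clMU(Z_0) = \clMU(Z_1)$.

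The natural transformation $\theta$ from $\hMU_{\et}^{\ast}$ to continuous $\ell$-adic \'etale cohomology sends fundamental bordism classes to fundamental cohomology classes by its very construction, and combined with the projection formula this gives $\theta\,\clMU(Z) = \clH(Z)$ once $\Zell \cong \Zell(1)$ is chosen. For the intersection product, I would reduce to transversal intersections of smooth subvarieties: for $Z, W \subset X$ meeting transversally, choose alterations of $Z$, $W$, and each component of $Z \cap W$ of $\ell$-prime degree and use multiplicativity of the fundamental class in $\hMU_{\et}$; the general case follows by a deformation-to-the-normal-cone argument as in \cite{totaro}.

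The main obstacle is the interaction of Poincar\'e duality for \'etale bordism, the projection formula in $\hMU_{\et}$, and the $\ell$-prime degrees provided by Gabber: these must interlock so that the class $\tfrac{1}{d}f_{\ast}[Z']$ is both independent of the choice of alteration \emph{and} lies in the integral $\Zell$-structure rather than only rationally. Without Gabber's strengthening of de Jong, the construction would only produce a rational class and the sought-after torsion obstruction would be lost, so this is the step where the main technical novelty enters.
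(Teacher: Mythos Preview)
Your outline matches the paper's approach, but the well-definedness step has a real gap. You assert that ``functoriality of $f_{\ast}$ and multiplicativity of degrees'' force two alterations to give the same class, but this is false as stated. If $g\colon W \to Z'_1$ is a degree-$e$ map of smooth projective varieties of the same dimension $i$, then $g_{\ast}[W]$ is \emph{not} equal to $e\cdot[Z'_1]$ in $\hMU^{\et}_{2i}(Z'_1)$; the two differ by terms lying in $\hMU_{>0}\cdot\hMU^{\et}_{\ast}(Z'_1)$. The equality holds only after passing to the quotient $\hMU^{\et}_{2i}(Z'_1)\otimes_{\hMU_{\ast}}\Zell$, and the reason it holds there is that this quotient in top degree is isomorphic to $H^{\et}_{2i}(Z'_1;\Zell)$, where the degree relation is obvious. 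That isomorphism is Theorem~\ref{quillenstheorem} (the $\ell$-adic analogue of Quillen's theorem) combined with Poincar\'e duality (Theorem~\ref{duality}), and it is the key technical input your proposal never invokes. The same omission recurs in your pushforward compatibility (the dimension-dropping case does not give zero in $\hMU^{\et}$ before tensoring down) and in your algebraic-equivalence argument; in each case the paper reduces to a relation visible only in homology and transports it back via Theorem~\ref{quillenstheorem}. This is exactly the mechanism that keeps the construction $\Zell$-integral rather than merely rational, so your closing paragraph identifies the right concern but not its resolution.

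A secondary point: your sketch for intersection products via reduction to transversal intersections is not what the paper does and would require a moving-lemma argument over arbitrary $k$. The paper instead proves compatibility with pullback along an arbitrary regular embedding by deformation to the normal cone (Theorem~\ref{lemma4.2} and Lemma~\ref{lemma4.3}), and then deduces the product statement from the diagonal embedding (Corollary~\ref{products}). You mention deformation to the normal cone at the very end, so you may have this in mind, but the transversality reduction is a detour you do not need and have not justified.
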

This implies the following collection of results.
\begin{cor}\label{obstruction}
Let $k$ be an algebraically closed field and $\ell$ any prime different from the characteristic of $k$. We choose an isomorphism $\Zell\cong \Zell(1)$.\\
{\rm (a)} For a cohomology class of even degree to be algebraic, it has to be in the image of the map 
$$\hMU_{\et}^{2\ast}(X)\otimes_{\hMU^{\ast}} \Zell \stackrel{\theta}{\longrightarrow} H_{\et}^{2\ast}(X;\Zell).$$
In particular, all odd degree higher order cohomology operations on an algebraic cohomology class vanish.\\
{\rm (b)} There is a finite $\ell$-group $G$, a $G$-representation $V$, a smooth complete intersection $Y\subset \Pro(V)$ on which $G$ acts freely and a cohomology class $y$ of $\ell$-torsion in $H_{\et}^{4}(Y/G;\Zell)$ that is not algebraic.\\
{\rm (c)} If $\car k \neq 2$, there is a smooth projective varieties $X$ over $k$ of dimension $7$ such that the map $CH^2X/2 \to H_{\et}^4(X;\Z/2)$ is not injective. Moreover, there is a smooth projective variety $Y$ of dimension $15$ and a $2$-torsion cycle in $CH^3Y$ that is homologically but not algebraically equivalent to zero.
\end{cor}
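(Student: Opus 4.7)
The plan is to derive all three parts from Theorem \ref{maintheoremintro} in the same way Totaro deduces his analogous consequences in \cite{totaro}, using Atiyah-Hirzebruch style Godeaux-Serre examples, which exist over any infinite field by \cite{ctsz}.

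For (a), the first assertion is immediate from the factorization $\clH = \theta \circ \clMU$: any algebraic class must lie in the image of $\theta$. The second assertion then follows from the standard observation that odd-degree higher cohomology operations on $H^{\ast}_{\et}(-;\Zell)$ vanish on classes in the image of $\theta$, because the coefficient ring of $\hMU$ is concentrated in even degrees and is torsion-free; this is the \'etale analogue of the corresponding complex-topological statement in \cite{totaro}.

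For (b), I would transport the Godeaux-Serre construction to positive characteristic. Fix a finite $\ell$-group $G$ carrying a class $u \in H^4_{\et}(BG;\Zell)$ on which some odd-degree higher cohomology operation is nontrivial; the standard examples from \cite{atiyahhirzebruch} apply because the computation only involves the continuous cohomology of the profinite group $G$ and is therefore insensitive to the characteristic. Choose a faithful $G$-representation $V$ over $k$ on which $G$ acts freely away from the origin, and take a smooth complete intersection $Y \subset \Pro(V)$ of sufficiently large dimension on which $G$ still acts freely; the existence of such $Y$ over any infinite $k$ is given in \cite{ctsz}. For $\dim Y$ large enough, the Hochschild-Serre spectral sequence combined with the weak Lefschetz theorem for $Y \subset \Pro(V)$ produces an injection $H^4_{\et}(BG;\Zell) \hookrightarrow H^4_{\et}(Y/G;\Zell)$, so the pullback $y$ of $u$ still supports the nontrivial higher operation and by (a) cannot be algebraic. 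This step is the main obstacle; once its Bertini-type inputs are in place, the rest is formal.

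For (c), I would specialize (b) to $\ell = 2$ and to the concrete $2$-groups from \cite{atiyahhirzebruch} and \cite{totaro} that give examples whose dimensions compute out to $7$ and $15$. The non-injectivity of $CH^2X/2 \to H^4_{\et}(X;\Z/2)$ then follows from Totaro's Bockstein argument: one produces a class in $CH^2X$ whose integral cohomological image is nonzero but lies in $2H^4_{\et}(X;\Zell)$ and hence vanishes modulo $2$. For the Griffiths-group assertion, I would use that $\clMU$ is defined modulo algebraic equivalence while the cohomological cycle map detects only homological equivalence --- exhibiting a $2$-torsion class in $CH^3Y$ whose image under $\clMU$ is nontrivial but whose image under $\clH$ vanishes then shows that the class is homologically but not algebraically trivial.
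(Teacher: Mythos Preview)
Parts (a) and (b) of your proposal are correct and essentially match the paper. For (b), the paper constructs an explicit map $\hEt(Y/G) \to K(\Zell,2)\times BG$ in $\hHh$ and shows it is a $\Zell$-cohomology isomorphism in degrees $\le r-1$ (Proposition~\ref{prop6.6}), which amounts to the same thing as your Hochschild--Serre/weak Lefschetz argument; the specific group is $G=(\Zl)^3$ and the operation is $Sq^3$ (resp.\ $\beta\Ph^1$ for $\ell$ odd).

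Part (c), however, has a genuine gap. You propose to show non-injectivity of $CH^2X/2\to H^4_{\et}(X;\Z/2)$ by producing a cycle whose integral cohomological image is nonzero but divisible by $2$. That only establishes that the image in $H^4_{\et}(X;\Z/2)$ vanishes; it says nothing about the cycle being nonzero in $CH^2X/2$. This is precisely where the factorization of Theorem~\ref{maintheoremintro} does real work, and a Bockstein argument alone cannot substitute for it. The paper takes $G=D(2)$, the extraspecial $2$-group of order $32$, and considers the explicit cycle $c_2A-c_2B\in CH^2X$ built from two $G$-representations $A,B$ restricted from $SO(4,k)$. Totaro's computation in $MU^{\ast}BG\otimes_{MU^{\ast}}\Z/2$, transported to $\hMU$ via the identification $\hMU^{\ast}(\sk_{r-1}BG)\otimes\Z/2\cong MU^{\ast}(\sk_{r-1}BG)\otimes\Z/2$ for the finite skeleton, shows that $\clMU(c_2A-c_2B)$ is nonzero in $\hMU^4_{\et}X\otimes_{\hMU^{\ast}}\Z/2$ while its image in $H^4_{\et}(X;\Z/2)$ vanishes. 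The nonvanishing in the cobordism quotient mod $2$ is what forces the class to be nonzero in $CH^2X/2$. The Griffiths-group example uses the same mechanism with $G=D(2)\times\Z/2$ in dimension $15$; your final sentence is closer to the mark there, but note that ``specializing (b)'' is misleading: (b) concerns the cokernel of $\theta$, while (c) concerns its kernel, and the groups and cycles involved are entirely different.
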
  
Note for part (a), that it is possible to construct higher cohomology operations in \'etale cohomology using the profinite \'etale homotopy type of $X$. The usual methods for spaces can be tranferred to profinite spaces.So far, it was only known that primary cohomology operations of odd degree vanish on the image of the cycle map in positive characteristic. Hence (a) yields a much stronger condition for algebraic cohomology classes over fields of positive characteristic.\\ 
Furthermore, all of the above examples of varieties in (b) and (c) are Godeaux-Serre varieties, constructed in \cite{serre}, for suitable finite groups. For (b), the group is just $G=(\Zl)^3$, so the varieties are defined in exactly the same way as by Atiyah and Hirzebruch in \cite{atiyahhirzebruch} over $k$ instead of $\C$. Note again that Colliot-Th\'el\`ene, Szamuely and Totaro have shown this already. But we show that the original proof of Atiyah and Hirzebruch in \cite{atiyahhirzebruch} fits very nicely in the picture of \'etale homotopy theory. Thus our proof might be closer to the remark of Milne \cite{milnetate}, Aside 1.4, that the arguments of \cite{atiyahhirzebruch} should carry over to positive characteristic.\\
For (c), the hard work has been done by Totaro \cite{totaro}, who studied the kernel of the map $MU^{\ast}X\otimes_{MU^{\ast}}\Z \to H^{\ast}(X,\Z)$. The varieties in (c) are the same as in \cite{totaro}, but defined over $k$ instead of $\C$. It is important to note that the variety in (c) is not the first example for the non-injectivity of the map $CH^2X/n\to H^4_{\et}(X;\Z/n)$ for a smooth projective variety $X$ over an algebraically closed field. This question has been discussed by Colliot-Th\'el\`ene in \cite{ct}. Examples have been found by Bloch and Esnault in \cite{blochesnault} and by Schoen in \cite{schoen1}, \cite{schoen2}, \cite{schoen3} and \cite{schoen4}. But in (c) we get new examples of cycles over an algebraically closed field of positive characteristic by methods that are different from the methods of Bloch, Esnault and Schoen.\\
As in the complex case, the factorization of the cycle map and the examples of Corollary \ref{obstruction} are a torsion phenomenon. The map $\hMU_{\et}^{\ast}(X) \to H_{\et}^{\ast}(X;\Zell)$ is the edge map of the top row of the Atiyah-Hirzebruch spectral sequence for \'etale cobordism $E_2=H_{\et}^{\ast}(X;\hMU^{\ast})\Rightarrow \hMU_{\et}^{\ast}X$. All its differentials are torsion. Hence if $H_{\et}^{\ast}(X;\Zell)$ has no torsion, then the $E_2$-term has no torsion and all differentials vanish. In this case the map $\hMU_{\et}^{\ast}(X)\otimes_{\hMU^{\ast}} \Zell \to H_{\et}^{\ast}(X;\Zell)$ is an isomorphism. Moreover, for an arbitrary $X$, the same argument shows that after tensoring with $\Qell$ the map 
$$\hMU_{\et}^{\ast}(X)\otimes_{\hMU^{\ast}} \Qell \to H_{\et}^{\ast}(X;\Zell)\otimes \Qell$$ is always an isomorphism.\\
Finally, Theorem \ref{maintheoremintro} is not implied by the work of Levine and Morel \cite{lm} on algebraic cobordism. After Totaro had written \cite{totaro}, Levine and Morel proved that $\Omega^{\ast}X\otimes_{\Lee^{\ast}} \Z$ is isomorphic to $CH^{\ast}X$ and that this is the universal oriented cohomology with additive formal group law over any field of characteristic zero. So in characteristic zero, Theorem \ref{maintheoremintro} and now the theorem of Totaro are weaker than the result in \cite{lm}. But over a field of positive characteristic, the universality of algebraic cobordism is not known. It is unlikely that Gabber's theorem which we use in the proof of Theorem \ref{maintheoremintro}, suffices for an extension of the work of Levine and Morel in positive characteristic. One would need a more detailed description of the complement of the smooth locus.\\
Before the kickoff, let us resume the outline of the paper. In the next section, we recall the profinite \'etale homotopy functor \cite{artinmazur}, \cite{fried} and \'etale cobordism, first considered in \cite{etalecob}. Since the generalized cycle map can be constructed more generally for \'etale Borel-Moore bordism of not necessarily smooth schemes, we will discuss this theory as well and prove the existence of a cap product pairing. In particular, we show Poincare duality for \'etale bordism for projective smooth varieties over algebraically closed fields. This will allow us to define a fundamental bordism class of a smooth projective variety. The construction of the cycle map and the proof of Theorem \ref{maintheoremintro} will occupy the fourth section. In the last section we will discuss the examples of Atiyah and Hirzebruch and check that the examples in \cite{totaro} of cycles algebraically but not homologically equivalent to zero work over any algebraically closed field of characteristic different from two.

{\bf Acknowledgements}. I would like to thank Christopher Deninger for his steady and motivating interest in this project, Bruno Kahn, Deepam Patel, Michael Joachim, Marc Levine, Mike Hill, Burt Totaro, Jean-Louis Colliot-Th\'el\`ene and Luc Illusie for very helpful discussions, comments and suggestions. I am especially indebted to Mike Hopkins who answered all my questions with patience and shared his ideas with great generosity. Parts of this research have been done during a stay at Havard University. I would like to thank the Mathematics Department and its members for the hospitality and the inspiring atmosphere.
\section{\'Etale realizations and profinite spectra} 
\subsection{The \'etale realization functor}
The starting point for \'etale homotopy theory is the work of Artin and Mazur \cite{artinmazur}. The goal was to define invariants as in algebraic topology for a scheme that depend only on the \'etale topology. Friedlander rigidified their construction by associating to a scheme $X$ a pro-object in the category $\Sh$ of simplicial sets. The construction is in all cases technical and we refer the reader to \cite{fried} for details, in particular for the category $HRR(X)$ of rigid hypercoverings. But let us quickly recall that for a locally noetherian scheme $X$, the \'etale topological type is defined to be the pro-simplicial set $\Et X := \mathrm{Re} \circ \pi :HRR(X) \to \Sh$ sending a rigid hypercovering $U$ of $X$ to the simplicial set of connected components of $U$. If $f: X \to Y$ is a map of locally noetherian schemes, then the strict map $\Et f:\Et X \to \Et Y$ is given by the functor $f^{\ast}:HRR(Y) \to HRR(X)$ of rigid pullbacks and the natural transformation $\Et X \circ f^{\ast} \to \Et Y$. For geometrically unibranched $X$, the pro-fundamental group of $\Et X$ is equal to the profinite \'etale fundamental group of $X$ as a scheme. The cohomology of $\Et X$ as a pro-space equals the \'etale cohomology for locally constant coefficients, see \cite{fried}. \\ 
To get an actual space, one would like to take the inverse limit of the underlying diagram of $\Et X$. But as remarked in \cite{fried}, one would not only lose information but also get the wrong (discrete) invariants. Nevertheless, one can control the loss of information as we explain now.\\
In \cite{profinhom}, we studied a profinite version $\hEt$ of this functor by composing $\Et$ with the completion from pro-$\Sh$ to the category of simplicial profinite sets $\hSh$. This functor is the composite of the completion $\Sh \to \hSh$ and taking the limit of the underlying diagram. We call the objects in $\hSh$ profinite spaces. Its morphisms are simplicial maps that are levelwise continuous. There at least two interesting model structures on $\hSh$. The first one has been studied by Morel in \cite{ensprofin}. For this model structure the cofibrations are the levelwise monomorphisms and the weak equivalences are maps that induce isomorphisms in continuous cohomology with $\Zl$-coefficients for a fixed prime $\ell$. In \cite{profinhom} a different structure has been considered for which the cofibrations are as before but the weak equivalences are maps that induce isomorphisms on profinite fundamental groups and in continuous cohomology for finite local coefficient systems. The model structure of \cite{profinhom} is particularly useful for general profinite completions as it provides a rigid version of the profinite completion functor of Artin and Mazur \cite{artinmazur}. In particular, for the \'etale topological type functor, it provides a suitable control of the limit process if one wants to pass from the pro-object $\Et X$ to an actual simplicial set. For example, the fundamental group of $\hEt X$ as a profinite space is always equal to the \'etale fundamental group of $X$ and the continuous cohomology of $\hEt X$ with profinite local coefficients equals the continuous \'etale cohomology of $X$ defined by Dwyer, Friedlander \cite{dwyfried} and Jannsen \cite{jannsen}.
\subsection{The stable profinite homotopy category}
The target of the cycle map that we are going to construct is a quotient of an $\ell$-adically completed version of cobordism. Therefore, we need the following stabilization of profinite spaces.\\
We denote by $\hSp$ the category of sequences $X_n \in \hShp$ of pointed profinite spaces for $n\geq0$ and maps  $\sigma_n:S^1 \wedge X_n \to X_{n+1}$ in $\hShp$. We call the objects in $\hSp$ profinite spectra. A morphism $f:X \to Y$ of profinite spectra consists of maps $f_n:X_n \to Y_n$ in $\hShp$ for $n\geq0$ such that $\sigma_n(1\wedge f_n)=f_{n+1}\sigma_n$. If $X$ is a pointed profinite space, there is a profinite suspension spectrum $\Sigma^{\infty}X$ given in degree $n$ by the $n$-fold suspension of $X$.\\ 
Let $\ell$ be a fixed prime number. Starting with the $\Zl$-model structure on $\hSh$, whose homotopy category is denoted by $\hHh$, there is a model structure on profinite spectra such that the suspension $S^1\wedge -$ becomes a Quillen equivalence, see \cite{etalecob}, Corollary 16. In other words, $\hSp$ is the stabilization of $\hSh$. We denote the homotopy category of $\hSp$ by $\hSHh$.\\
The completion functor $\compl:\Sh \to \hSh$, which is defined in each dimension by taking the limit over all equaivalence relations with a finite quotient set, and the forgetful functor $|\cdot|: \hSh \to \Sh$ induce levelwise corresponding functors on the category of spectra. When we equip the category of simplicial spectra $\Sp$ with the Bousfield-Friedlander model structure \cite{bousfried}, we get the following adjointness, cf. \cite{etalecob}. 
\begin{prop}\label{quillenpair}
Completion $\compl :\Sp \to \hSp$ preserves weak equivalences and cofibrations.\\
The forgetful functor $|\cdot|: \hSp \to \Sp$ preserves fibrations and weak equivalences between
fibrant objects.\\
In particular, $\compl$ induces a functor on the homotopy categories and the adjoint pair $(\compl , |\cdot|)$ is a Quillen pair of adjoint functors.
\end{prop}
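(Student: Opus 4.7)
My approach is to reduce the stable statement to its unstable counterpart on pointed (profinite) simplicial sets and then propagate it upward using the stabilization machinery. Both the Bousfield--Friedlander model structure on $\Sp$ and the stable model structure on $\hSp$ of \cite{etalecob}, Corollary 16, are constructed as $S^1$-stabilizations of the pointed categories $\Shp$ and $\hShp$. So I first check that $\compl \colon \Shp \to \hShp$ is already a left Quillen functor with right adjoint $|\cdot|$, and then lift.

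For the unstable assertion: cofibrations in both $\Shp$ and $\hShp$ are the levelwise monomorphisms. In each simplicial degree $\compl$ takes a set $S$ to $\lim_R S/R$ where $R$ ranges over equivalence relations with finite quotient. An injection $X \hookrightarrow Y$ pulls back each such $R$ on $Y$ to an equivalence relation on $X$, giving a cofinal diagram of injective maps of finite quotients; hence $\compl X \to \compl Y$ is an injection, and $\compl$ preserves cofibrations. For weak equivalences I invoke \cite{profinhom}, where $\compl$ is shown to be left Quillen into $\hShp$ with the chosen model structure; equivalently, $\compl$ sends standard weak equivalences of simplicial sets to maps inducing isomorphisms in continuous cohomology with finite local coefficients, which is the required weak equivalence class. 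Thus $(\compl, |\cdot|)$ is an unstable Quillen pair, and by Ken Brown's lemma $|\cdot|$ preserves weak equivalences between fibrant objects.

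To lift to spectra the key observation is that $\compl$ and $|\cdot|$ both commute with $S^1 \wedge -$: the circle is already a finite, hence profinite, simplicial set, so the unit and counit restricted to $S^1$ are isomorphisms. Consequently both functors apply levelwise to spectra and respect bonding maps. Stable cofibrations in the two categories are characterized by the usual pushout-product criterion built from levelwise cofibrations, and since $\compl$ preserves monomorphisms, pushouts, and smashing with $S^1$, it preserves stable cofibrations; dually $|\cdot|$ preserves the corresponding fibrations. Stably trivial cofibrations are generated by the levelwise trivial cofibrations together with the localizing maps that force $S^1 \wedge -$ to become a Quillen equivalence, and both classes are preserved by the unstable Quillen pair combined with the commutation with suspension. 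A final application of Ken Brown's lemma then gives that $|\cdot|$ preserves all weak equivalences between fibrant profinite spectra. The main, though modest, obstacle is tracking the generating (trivial) cofibrations of the stable profinite structure of \cite{etalecob} carefully enough to apply the recognition criterion; since that structure was built by mirroring Bousfield--Friedlander, this is bookkeeping rather than a new input.
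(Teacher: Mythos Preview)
The paper does not prove this proposition; it simply records it with a reference to \cite{etalecob}. Your sketch is a reasonable reconstruction of the argument one would expect to find there: establish the unstable Quillen adjunction $(\compl,|\cdot|)$ on pointed (profinite) spaces, observe that both functors commute with $S^1\wedge-$ because $S^1$ is already finite and $\compl$ is a left adjoint, and then transport the adjunction through the stabilization procedure.

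Two small points are worth tightening. First, the model structure on $\hSh$ used for the stable statement here is Morel's $\Zl$-structure from \cite{ensprofin}, not the finer structure of \cite{profinhom}; your appeal to \cite{profinhom} for preservation of weak equivalences still works, but the cleaner justification is simply that an ordinary weak equivalence of simplicial sets induces an isomorphism on $H^*(-;\Zl)$, hence becomes a weak equivalence after completion in the $\Zl$-structure. Second, your verification that $\compl$ preserves levelwise injections is correct but could be stated more carefully: given $X\hookrightarrow Y$ and a cofinite equivalence relation $R$ on $X$, one extends $R$ to $Y$ by placing $Y\setminus X$ in a single new class, so that $X/R\hookrightarrow Y/R'$; this exhibits the diagram of finite quotients of $X$ as a retract of a cofinal subdiagram for $Y$, giving injectivity of $\hat X\to\hat Y$. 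The final paragraph on generating (trivial) cofibrations is indeed bookkeeping, but since the stable structure on $\hSp$ in \cite{etalecob} is built precisely by Bousfield--Friedlander stabilization, the standard Hovey-style recognition lemma applies verbatim once the unstable Quillen pair and the compatibility with suspension are in hand.
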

Let $\hMU \in \hSp$ be the completion of the simplicial Thom spectrum representing complex cobordism. For a profinite spectrum $X$, we denote by $\hMU^{n}X$, and call it the $n$th profinite cobordism of $X$, the group of homomorphisms of profinite spectra $\hMU^{n}X:=\Hom_{\hSHh}(X,\hMU\wedge S^n)$. If $X$ is a profinite space, we denote by $\hMU^{n}X$ to be the reduced profinite cobordism of the suspension spectrum $\Sigma^{\infty}(X_+)$, where $X_+$ indicates that we add a disjoint basepoint to $X$.\\
The coefficient ring $\hMU^{\ast}$ can be obtained by taking a fibrant replacement $\hMU^{\ell}$ of $\hMU$ in $\hSp$. It depends of course on $\ell$, in fact, one gets $\hMU^{\ast}=MU^{\ast}\otimes_{\Z}\Zell$, i.e. it is a free polynomial ring $\Zell[x_1,x_2, \ldots]$ with $x_i$ of degree $-2i$. Let $\hMU^{<0}$ denote the ideal of elements in negative degrees. For any pointed profinite space $X$, $\hMU^{\ast}X$ is a $\hMU^{\ast}$-module. The canonical map of profinite spectra $\hMU \to H\Zell$ corresponding to the map of coefficients $\hMU^{\ast} \to \Zell$, which sends all generators $x_i$ to $0$, induces a natural map from $\hMU^{\ast}X$ to the continuous cohomology $H_{\mathrm{cont}}^{\ast}(X;\Zell)$. It vanishes on the submodule $\hMU^{<0}\cdot \hMU^{\ast}X$ and hence induces a natural map
$$\hMU^{\ast}X\otimes_{\hMU^{\ast}} \Zell \to H_{\mathrm{cont}}^{\ast}(X;\Zell).$$  
We need the following analogue of Totaro's generalization  \cite{totaro}, Theorem 2.1, of Quillen's theorem \cite{quillen}.  
\begin{theorem}\label{quillenstheorem}
Let $X$ be a finite simplicial set. Then the groups $\hMU^nX\otimes_{\hMU^{\ast}} \Zell$ are zero in negative dimensions and equal to $H_{\mathrm{cont}}^0(X;\Zell)$ in dimension $0$. Moreover, the map $\hMU^{\ast}X\otimes_{\hMU^{\ast}} \Zell \to H_{\mathrm{cont}}^{\ast}(X;\Zell)$
is an isomorphism in dimensions $\leq 2$ and injective in dimensions $\leq 4$.
\end{theorem}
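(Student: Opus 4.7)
The plan is to reduce the statement to the classical, non-completed theorem of Totaro (\cite{totaro}, Theorem 2.1), exploiting the fact that for a finite simplicial set $X$ the passage from $MU$ to its $\ell$-adic completion $\hMU$ amounts to tensoring with the flat $\Z$-module $\Zell$.

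First I check that $MU^n X$ is a finitely generated abelian group in each degree. The Atiyah-Hirzebruch spectral sequence $E_2^{p,q}=H^p(X;MU^q)\Rightarrow MU^{p+q}X$ has only finitely many nonzero columns since $H^p(X;\Z)=0$ for $p>\dim X$, and each term $E_2^{p,q}=H^p(X;\Z)\otimes MU^q \oplus \Tor(H^{p+1}(X;\Z),MU^q)$ is a finitely generated abelian group because $H^p(X;\Z)$ is finitely generated and $MU^q$ is a free abelian group of finite rank (the monomials in the polynomial generators $x_i$ of degree $-2i$ of total degree $q$). Thus $MU^n X$ is a finite extension of finitely generated groups, hence finitely generated.

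Given this finiteness, the model-categorical profinite completion functor of Proposition \ref{quillenpair} agrees on the finite spectrum $\Sigma^\infty X_+$ with classical Bousfield $\ell$-completion, and on a finitely generated abelian group the latter is simply the functor $-\otimes_\Z\Zell$. Hence $\hMU^n X\cong MU^nX\otimes_\Z\Zell$. Combining this with the already noted identity $\hMU^*=MU^*\otimes_\Z\Zell$ and with the flatness of $\Zell$ over $\Z$ yields
$$\hMU^n X\otimes_{\hMU^*}\Zell \;\cong\; (MU^nX\otimes_{MU^*}\Z)\otimes_\Z\Zell,$$
and similarly $H^n_{\mathrm{cont}}(X;\Zell)\cong H^n(X;\Z)\otimes_\Z\Zell$ since $X$ is finite and the coefficient system is constant. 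Under these identifications, the comparison map of the theorem is Totaro's classical map tensored with $\Zell$. Applying \cite{totaro}, Theorem 2.1, the classical map $MU^nX\otimes_{MU^*}\Z \to H^n(X;\Z)$ vanishes for $n<0$, equals $H^0(X;\Z)$ for $n=0$, is an isomorphism for $n\leq 2$, and is injective for $n\leq 4$. Flatness of $\Zell$ over $\Z$ preserves zero, the dimension-zero identification, isomorphisms, and injections, yielding each of the assertions of the theorem.

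The only nonformal ingredient, and the main obstacle I expect, is the identification $\hMU^n X\cong MU^n X\otimes_\Z\Zell$ for finite $X$. It requires verifying that the $\Zl$-model structure on $\hSp$ underlying Proposition \ref{quillenpair} interacts with $MU$-cohomology of a finite spectrum in the same way as the classical $\ell$-adic Bousfield localization does on abelian groups of finite type. Once this comparison is in place the theorem follows formally by flat base change along $\Z\to\Zell$ from Totaro's result.
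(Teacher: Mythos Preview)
Your proposal is correct and follows essentially the same route as the paper: reduce to Totaro's classical Theorem 2.1 by establishing $\hMU^{\ast}X\cong MU^{\ast}X\otimes_{\Z}\Zell$ and $H_{\mathrm{cont}}^{\ast}(X;\Zell)\cong H^{\ast}(X;\Z)\otimes_{\Z}\Zell$ for finite $X$, then invoke flatness of $\Zell$ over $\Z$. The only cosmetic difference is that the paper obtains the key identification directly from the forgetful side of the Quillen adjunction in Proposition~\ref{quillenpair} (passing to a fibrant replacement $\hMU^{\ell}$ and using that $X$ is finite), whereas you phrase it via Bousfield $\ell$-completion and add an explicit Atiyah--Hirzebruch argument for the finite generation of $MU^nX$; both amount to the same thing.
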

\begin{proof}
Let $\hMU^{\ell}$ denote a fibrant replacement of $\hMU$ in $\hSp$. By Proposition \ref{quillenpair} and since $X$ is finite, the forgetful functor $|\cdot|:\hSp \to \Sp$ induces an isomorphism between the continuous $\hMU^{\ell}$-cohomology of $X$ as a profinite space and its $|\hMU^{\ell}|$-cohomology as a simplicial set. Since $X$ is finite, $\Zell$ is torsion-free and $\hMU^{\ast}$ is isomorphic to $MU^{\ast}\otimes_{\Z}\Zell$, we conclude $\hMU^{\ast}(X)\cong MU^{\ast}(X)\otimes_{\Z}\Zell$ for any finite simplicial set $X$. Similarly, $H_{\mathrm{cont}}^{\ast}(X;\Zell)$ is isomorphic to $H^{\ast}(X;\Z)\otimes_{\Z}\Zell$. It follows that for finite $X$, the map $\hMU^{\ast}X\otimes_{\hMU^{\ast}} \Zell \to H_{\mathrm{cont}}^{\ast}(X,\Zell)$ is just the image of the map $MU^{\ast}X\otimes_{MU^{\ast}} \Z \to H^{\ast}(X,\Z)$ under the tensor product with $\Zell$ over $\Z$. Since $\Zell$ is torsion-free, the result follows from Theorem 2.1 of \cite{totaro}. 
\end{proof}
\begin{remark}
Let $BP$ be the Brown-Peterson spectrum at the prime $\ell$ and let $\hBP$ be its profinite completion. For a finite simplicial set $X$, the isomorphism $MU^{\ast}X\otimes_{MU^{\ast}}\Z_{(\ell)}\cong BP^{\ast}X\otimes_{BP^{\ast}}\Z_{(\ell)}$ and the canonical map $\Z_{(\ell)} \to \Zell$ induce an isomorphism 
$$\hMU^{\ast}X\otimes_{\hMU^{\ast}}\Zell \cong \hBP^{\ast}X\otimes_{\hBP^{\ast}}\Zell.$$
Thus the whole game could have been played using $\hBP$ instead of $\hMU$. But we stick to $\hMU$.
\end{remark}
One of the most important properties of $\hSHh$ is that it is the target category of the extension of the \'etale realization functor to motivic spectra. Recall that the \'etale realization functor has been extended to motivic spaces by Isaksen in \cite{a1real}. So, for example, we may talk about the \'etale homotopy type of the Thom space of the normal bundle of a regular embedding etc. The following extension has been proved in \cite{etalecob}; in fact one can obtain a more general result over an arbitrary base field.
\begin{theorem}
Let $k$ be an algebraically closed field of characteristic $\neq \ell$. The \'etale realization functor of simplicial presheaves has a natural extension to the stable homotopy category of motivic $\Pro^1$-spectra $\hEt: \SHh(k) \to \hSHh$. The image of the spectrum $MGL$ representing algebraic cobordism is isomorphic to $\hMU$ in $\hSHh$.
\end{theorem}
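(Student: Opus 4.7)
The plan is to build the extension in three stages, following Isaksen's approach for the unstable case and bootstrapping through the $S^1$-stable category before inverting $\Pro^1$. First, I would take Isaksen's étale realization functor on motivic spaces (simplicial presheaves on $\Sm/k$) of \cite{a1real}, and verify that, after composition with the profinite completion $\compl:\Sh\to\hSh$, it sends the motivic model structure to the $\Zl$-model structure on $\hSh$: $\A^1$-weak equivalences become $\Zl$-equivalences because $\hEt(\A^1_k)$ is $\ell$-adically contractible, and Nisnevich (in fact étale) local equivalences become weak equivalences by proper base change in $\ell$-adic cohomology. This produces a left Quillen functor $\hEt:\Sh_{\bullet}(k)\to\hShp$ of pointed simplicial presheaves.

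Next, I would stabilize with respect to $S^1$. Since the simplicial $S^1$ in $\Sh_{\bullet}(k)$ realizes to the constant profinite $S^1$, the functor commutes with $S^1$-suspension on the nose, and Proposition \ref{quillenpair} together with the stabilization result \cite{etalecob} Corollary 16 yields an extension $\hEt:\SHhS\to \hSHh$. The step for $\Pro^1$-spectra rests on the computation that $\hEt(\Pro^1_k)\simeq \widehat{S^2}$ in $\hHhp$; indeed, the étale cohomology of $\Pro^1$ is concentrated in degrees $0$ and $2$ with $\Zl$-coefficients, and since $k$ is algebraically closed the Tate twist is trivial as an object of $\hShp$ (only the ring structure of coefficients is twisted, which does not affect the homotopy type). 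Thus $\hEt(\Pro^1\wedge -)\simeq S^2\wedge -$ on the nose after realization, which is a Quillen equivalence on $\hSp$; this allows one to pass through the universal property of $\SHhP$ as the localization inverting $\Pro^1$-suspension and produce the desired $\hEt:\SHh(k)\to \hSHh$.

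For the identification $\hEt(MGL)\simeq \hMU$, I would exploit the functorial construction of both spectra from Thom spaces. The motivic spectrum $MGL$ has $n$-th space $\Thom(\gamma_n)$, the Thom space of the tautological $n$-plane bundle over the infinite Grassmannian $BGL_n$ in $\Sm/k$, with bonding maps induced by the standard inclusions of bundles. By the Grassmannian computation in \'etale cohomology and the fact that $\hEt(BGL_n)$ has the $\ell$-completed homotopy type of $BU(n)$ (which can be seen by comparing profinite fundamental groups and continuous cohomology rings, both being polynomial on Chern classes), the étale realization of $\Thom(\gamma_n)$ agrees with the profinite Thom space $\widehat{\Thom(\gamma_n^{\mathrm{top}})}$ underlying the $n$-th space of $\hMU$. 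Since the bonding maps are constructed naturally from inclusions of bundles, they agree after realization, producing a levelwise equivalence of profinite spectra.

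The main obstacle I expect is not the algebra of Grassmannians but the careful verification that the realization genuinely descends to a functor between the $\Pro^1$-stable homotopy categories. One must handle the distinction between the simplicial model structure on $\hSp$ and the $\Pro^1$-stable structure on $\SHh(k)$, checking that the Quillen pair $(\compl,|\cdot|)$ combines correctly with the respective left Bousfield localizations; the potential subtlety is that $\Pro^1$ introduces a $\Ge_m$-factor whose étale realization has nontrivial $\mu_{\ell^{\infty}}$-content, but because $k$ is algebraically closed this is identified (noncanonically) with the constant profinite $\widehat{S^1}$, making the comparison go through. Once this is settled, the identification of $MGL$ with $\hMU$ is essentially formal from naturality of Thom space constructions.
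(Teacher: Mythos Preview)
The paper does not supply a proof of this theorem; it is quoted from \cite{etalecob} with the remark ``The following extension has been proved in \cite{etalecob}; in fact one can obtain a more general result over an arbitrary base field.'' So there is no proof in the paper to compare against directly, and your outline is essentially a reconstruction of what such a proof ought to look like. The three-stage strategy (unstable realization via Isaksen, $S^1$-stabilization, then $\Pro^1$-inversion using $\hEt(\Pro^1_k)\simeq \widehat{S^2}$) is the natural route and is in line with how \cite{etalecob} proceeds.

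There is, however, one genuine soft spot in your argument for the identification $\hEt(MGL)\simeq \hMU$. You write that $\hEt(BGL_n)$ has the $\ell$-completed homotopy type of $BU(n)$ ``by comparing profinite fundamental groups and continuous cohomology rings, both being polynomial on Chern classes.'' Knowing the cohomology rings coincide abstractly is not enough in the $\Zl$-model structure: weak equivalences are \emph{maps} inducing $\Zl$-cohomology isomorphisms, so you must first produce an actual map between $\hEt(BGL_{n,k})$ and $\widehat{BU(n)}$ (compatibly in $n$ and with the bonding maps). Over $\C$ this comes from the Artin--Mazur comparison with the analytic topology, but over an algebraically closed field of positive characteristic there is no direct analytic comparison. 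The paper itself signals how this is handled: a few lines after the theorem it notes that ``the isomorphism $\hEt MGL\cong \hMU$ is constructed using a lifting to characteristic zero.'' In other words, one lifts the Grassmannians (or the relevant finite approximations) to a mixed-characteristic base, uses smooth/proper base change to identify the $\ell$-adic \'etale homotopy types of the special and generic fibres, and then invokes the complex comparison at the generic point. Your sketch should make this lifting step explicit; otherwise the levelwise equivalence you assert between $\hEt(\Thom(\gamma_n))$ and $\widehat{\Thom(\gamma_n^{\mathrm{top}})}$ is not justified in characteristic $p>0$.
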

\section{\'Etale topological bordism}
For the rest of this paper we assume that $k$ is an algebraically closed field. Let $X$ be a scheme over $k$. Let $\ell$ be a prime different from the characteristic of $k$. We equip $\hSh$ and $\hSp$ with the $\Zl$-model structures of the previuos section. We will define \'etale topological bordism and cobordism groups of $X$. Let us start with the latter one. 
\subsection{\'Etale cobordism}
We define \'etale topological cobordism to be the generalized cohomology theory represented by $\hMU$ in $\hSHh$ applied to $\hEt X$, i.e.
$$\hMU_{\et}^{n}(X):=\Hom_{\hSHh}(\Sigma^{\infty}(\hEt X),\hMU\wedge S^n).$$
If $X$ is not equipped with a specified basepoint, we will denote by $\hMU_{\et}^{n}(X)$ the reduced \'etale cobordism of $\hEt X_+$.\\
We denote by $\Sm_k$ the category of quasiprojective smooth schemes of finite type over $k$. We recall from \cite{etalecob} that $\ell$-adic \'etale topological cobordism is an oriented cohomology theory on $\Sm_k$ in the sense of \cite{panin}, Definition 2.1. We will outline the proof below for completeness.\\
The first thing to do, is to equip \'etale cobordism with an orientation. There is the following canonical choice. Let $MGL$ be the motivic spectrum representing algebraic cobordism and let  $x_{MGL}:\Pro^{\infty} \to MGL \wedge \Pro^1$ be its orientation, see e.g. \cite{panin}. We define the orientation $x_{\hMU}:=\hEt(x_{MGL}): \hEt \Pro_k^{\infty} \to \hMU \wedge S^2$ of \'etale cobordism to be the image of the orientation of algebraic cobordism under $\hEt$. Since the isomorphism $\hEt MGL\cong \hMU$ is constructed using a lifting to characteristic zero, the appropriate identifications show that this orientation corresponds to the image under completion of the canonical orientation of $MU$. 
\begin{theorem}\label{orientation}
Let $k$ be an algebraically closed field and let $\ell$ be a prime different from the characteristic of the base field $k$. With the above orientation, $\ell$-adic \'etale topological cobordism is an oriented cohomology theory on $\Sm_k$. In particular, for every projective morphism $f:X \to Y$ of relative codimension $d$ in $\Sm_k$, there are pushforward maps $f_{\ast}:\hMU_{\et}^{2\ast}(X) \to \hMU_{\et}^{2\ast +2d}(Y)$. 
\end{theorem}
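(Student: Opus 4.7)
The plan is to transport the known oriented structure from algebraic cobordism $MGL$ through the \'etale realization functor $\hEt$ and then invoke Panin's formalism \cite{panin} to extract the projective pushforward maps. Thus the proof splits into three steps: (i) verify that $\hMU_{\et}^{\ast}$ is a multiplicative cohomology theory on $\Sm_k$ with the right compatibilities with vector bundles, (ii) promote $x_{\hMU}$ to a $\Pro^{\infty}$-orientation in the sense of Panin, and (iii) invoke Panin's abstract machinery.

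For step (i) I would establish homotopy invariance, Nisnevich (equivalently Zariski, on $\Sm_k$) Mayer--Vietoris, and the Thom space compatibility $\hEt(\Thom(E))\simeq \Thom(\hEt(E))$ in $\hSHh$ for every vector bundle $E\to X$ in $\Sm_k$. The ring structure comes from the ring-spectrum structure of $\hMU$, inherited from $MU$ under completion. All of these properties follow from the extension of $\hEt$ to a symmetric monoidal functor $\SHh(k)\to \hSHh$ stated at the end of Section 2: $\hEt$ sends motivic weak equivalences to $\Zl$-equivalences, preserves homotopy cofiber sequences, and carries Nisnevich hypercovers to hypercovers in $\hSh$, so the relevant cohomology-theoretic properties of $MGL^{\ast\ast}$ translate to $\hMU_{\et}^{\ast}$.

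For step (ii) I would check that $x_{\hMU}=\hEt(x_{MGL})$ satisfies Panin's orientation axiom. The motivic orientation $x_{MGL}:\Pro^{\infty}\to MGL\wedge \Pro^1$ restricts over $\Pro^1$ to the generator of the appropriate $MGL$-cohomology group. Applying $\hEt$ and using the identification $\hEt MGL\cong \hMU$ in $\hSHh$ (constructed in \cite{etalecob} via a characteristic-zero lift), together with the $\Zl$-equivalence $\hEt \Pro_k^1\simeq S^2$ which holds because $\car k\neq \ell$, one sees that $x_{\hMU}|_{\hEt \Pro_k^1}$ is the canonical generator of $\hMU^2(S^2)\cong \hMU^0(\mathrm{pt})$. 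Hence $x_{\hMU}$ is a $\Pro^{\infty}$-orientation in Panin's sense.

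Granted steps (i) and (ii), Panin's abstract formalism produces Chern classes $c_i(E)\in \hMU_{\et}^{2i}(X)$ for vector bundles $E\to X$, the projective bundle formula for $\Pro(E)$, Thom isomorphisms for arbitrary vector bundles, and the desired functorial pushforward maps $f_{\ast}:\hMU_{\et}^{2\ast}(X)\to \hMU_{\et}^{2\ast+2d}(Y)$ for every projective morphism $f:X\to Y$ of relative codimension $d$, satisfying the projection formula and base change. The delicate point I expect is the Thom space compatibility inside step (i): one has to use Isaksen's construction of the \'etale realization of motivic Thom spaces together with the purity and excision properties of the \'etale site to identify the \'etale realization of $E/(E\setminus X)$ with the profinite Thom space built from the \'etale realization of the sphere bundle. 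This is also the place where the hypothesis $\ell\neq \car k$ is essential, since it ensures both that \'etale cohomology computes the right invariants and that the sphere $\hEt\Pro_k^1$ behaves like a genuine $2$-sphere after $\Zl$-completion.
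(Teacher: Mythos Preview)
Your strategy---transport the oriented structure from $MGL$ through $\hEt$ and then invoke Panin---is sound in outline and reaches the same destination, but the paper takes a noticeably more hands-on route. Rather than arguing that the axioms of a ring cohomology theory on $\Sm_k$ are inherited formally from $MGL^{\ast\ast}$ via functoriality of $\hEt$, the paper verifies each axiom of Panin's framework directly on the \'etale side: $\A^1$-invariance comes from the fact that $\hEt(X\times\A^1)\to\hEt X$ is a $\Zl$-equivalence (here is where $\ell\neq\car k$ enters); excision is reduced to an \'etale cohomology isomorphism via \cite{milne} III, Proposition~1.27; and the orientation is established not by checking the restriction of $x_{\hMU}$ to $\Pro^1$ but by proving the projective bundle formula for $\hMU_{\et}^{\ast}(\Pro(E))$ outright, using the known projective bundle formula in \'etale cohomology together with a Mayer--Vietoris patching argument. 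Pushforwards then come from \cite{panin2}.

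The trade-off is this: your approach is cleaner conceptually but leans on properties of $\hEt$ that the paper does not actually assert. In particular, the paper only records that $\hEt$ extends to a functor $\SHh(k)\to\hSHh$ with $\hEt(MGL)\cong\hMU$; it does not claim symmetric monoidality, nor any statement about Nisnevich hypercovers mapping to hypercovers in $\hSh$. Your Thom-space compatibility $\hEt(\Thom(E))\simeq\Thom(\hEt(E))$ also needs care even to formulate, since $\hEt(E)$ is a profinite space rather than a vector bundle. These points are not insurmountable, but they are exactly what the paper sidesteps by working directly with \'etale cohomology inputs. The paper's argument is thus more self-contained, while yours would require either importing or proving additional structural results about the realization functor.
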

\begin{proof}
That \'etale cobordism is a ring cohomology theory follows immediately from the properties of $MU$. The $\A^1$-invariance follows from the fact that, since we consider the $\Zl$-model structure, $\hEt (X\times \A^1) \to \hEt X$ is a weak equivalence in $\hSh$. This would not be true in general if we had not completed away from the characteristic of the base field. To check excision, let $e:(X',U') \to (X,U)$ be a morphism of pairs of schemes in $\Sm_k$ such that $e$ is \'etale and for $Z=X-U$, $Z'=X' -U'$ one has $e^{-1}(Z)=Z'$ and $e:Z' \to Z$ is an isomorphism. By \cite{milne} III, Proposition 1.27, we know that the morphism $e$ induces an isomorphism in \'etale cohomology $H^{\ast}(\hEt(X)/\hEt(U);\Zl) \cong H^{\ast}(\hEt(X')/\hEt(U');\Zl)$. Hence the map $\hEt(X')/\hEt(U') \to \hEt(X)/\hEt(U)$ is an isomorphism in $\hHhp$. Therefore, it induces the desired isomorphism 
$\hMU^{\ast}(\hEt(X)/\hEt(U)) \cong \hMU^{\ast}(\hEt(X')/\hEt(U'))$.\\   
It remains to check that \'etale cobordism is an oriented theory, i.e. that it has Chern classes. Let $E \to X$ be a vector bundle of rank $n$ over $X$ and let $\Oh(1)$ be the canonical quotient line bundle over its projective bundle $\Pro(E)$. It determines a morphism $\Pro(E) \to \Pro^N_k$ for some sufficiently large $N$. Together with the orientation map $x_{\hMU}$ we get an element $\xi \in \hMU^2_{\et}(\Pro(E))$. This induces a projective bundle formula for \'etale cobordism, i.e.  $\hMU_{\et}^{\ast}(\Pro(E))$ is a free $\hMU_{\et}^{\ast}(X)$-module with basis $(1, \xi, \xi^2, \ldots, \xi^{n-1})$. For, 
the projective bundle formula for \'etale cohomology implies that the canonical map $\hEt (X) \times \hEt (\Pro^n_k) \to \hEt (X\times \Pro^n_k)$ is a weak equivalence in $\hSh$; this implies a projective bundle formula locally and a Mayer-Vietoris argument shows that the formula holds globally. Chern classes for $E$ are then defined in the well known way. The existence and uniqueness of pushforward maps follows from \cite{panin2}.
\end{proof}

The next proposition will allow us to apply the completed version of Quillen's Theorem \ref{quillenstheorem}.
\begin{prop}\label{finiteness}
Let $X$ be an $n$-dimensional scheme of finite type over an algebraically closed field. Let $\ell$ be a prime different from the characteristic of $k$. Then $\hEt X$ has the homotopy type of a finite simplicial set in $\hSh$ with respect to the $\Zl$-model structure.
\end{prop}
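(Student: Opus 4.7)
The plan is to reduce the statement to classical finiteness results in \'etale cohomology and then build a finite combinatorial approximation to $\hEt X$ by a Postnikov-style obstruction argument in the $\Z/\ell$-local model structure.

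First, I would unpack what is to be proved: by the definition of the $\Z/\ell$-model structure on $\hSh$ from \cite{profinhom}, a morphism is a weak equivalence if and only if it induces isomorphisms on continuous cohomology with all finite $\Z/\ell$-twisted coefficient systems, and by the Artin--Mazur--Friedlander comparison recalled in the previous section these groups for $\hEt X$ coincide with the classical \'etale cohomology groups $H^i_{\et}(X; \mathcal{F})$ for constructible $\Z/\ell$-sheaves $\mathcal{F}$. Hence it is enough to produce a finite simplicial set $Y$, viewed in $\hSh$ via completion, together with a map $Y \to \hEt X$ inducing such isomorphisms.

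The key input is then the finiteness and vanishing theorems for \'etale cohomology of schemes of finite type over an algebraically closed field: since $\car k \neq \ell$, Deligne's finiteness theorem (SGA $4\tfrac{1}{2}$, \emph{Th. finitude}) together with Artin's cohomological dimension bound (SGA 4, Exp.~X) give that $H^i_{\et}(X; \mathcal{F})$ is a finite $\Z/\ell$-module for every constructible $\Z/\ell$-sheaf $\mathcal{F}$, and that these groups vanish for $i > 2n$. These two properties feed directly into a Postnikov induction in $\hSh$: starting with a finite $0$-skeleton (using that $\pi_0^{\et}(X)$ is finite), at each stage one attaches only finitely many cells, guided by generators of the relevant continuous cohomology groups and by obstruction classes landing in the same finite cohomology groups one dimension up. Because the cohomology vanishes above degree $2n$, the construction terminates after finitely many stages, producing a finite simplicial set $Y$ and a $\Z/\ell$-equivalence $Y \to \hEt X$.

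The main difficulty I foresee lies in the fundamental group: the \'etale $\pi_1(X)$ is in general an infinite profinite group, so the $K(\pi_1^{\et}, 1)$-piece of $\hEt X$ is not literally a finite complex. The point is that the $\Z/\ell$-model structure only records the action of $\pi_1^{\et}(X)$ on finite $\Z/\ell$-modules, and the bounded $\ell$-cohomological dimension forces only finitely many continuous mod-$\ell$ classes to appear at each Postnikov stage. It is precisely this interplay between finiteness of cohomology, boundedness in degree, and the coarseness of the $\Z/\ell$-local equivalences that allows the a priori infinite profinite data of $\hEt X$ to be compressed into a finite combinatorial model.
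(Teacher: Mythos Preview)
Your proposal is on the right track but rests on a misreading of which model structure is in play, and this makes you work much harder than necessary. The paper uses Morel's $\Z/\ell$-model structure from \cite{ensprofin}, in which weak equivalences are precisely the maps inducing isomorphisms on continuous cohomology with \emph{constant} $\Z/\ell$-coefficients; it is the \emph{other} model structure, the one from \cite{profinhom}, that involves profinite fundamental groups and arbitrary finite local coefficient systems. Since only constant $\Z/\ell$-coefficients matter here, your entire discussion of $\pi_1^{\et}(X)$ and twisted systems is beside the point: there is no ``$K(\pi_1^{\et},1)$-piece'' to worry about in this localisation, and the difficulty you foresee simply does not arise.

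With that correction, the argument collapses to the paper's one-line proof: the \'etale cohomology groups $H_{\et}^i(X;\Z/\ell)$ are finite for each $i$ and vanish for $i>2n$, and this alone forces $\hEt X$ to be $\Z/\ell$-equivalent to a finite simplicial set. Your Postnikov cell-attachment construction is one way to make that implication explicit, but in the $\Z/\ell$-local setting it is standard that an object with finite, bounded $\Z/\ell$-cohomology admits a finite model, and the paper treats this as understood. The genuine arithmetic input --- finiteness and bounded cohomological dimension for schemes of finite type over an algebraically closed field with $\ell\neq\car k$ --- you identified correctly.
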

\begin{proof}
It suffices to remark that the \'etale cohomology groups $H_{\et}^i(X;\Zl)$ are finite for every $i$ and vanish for $i\geq 2n+1$. 
\end{proof}
\subsection{\'Etale Borel-Moore bordism}
The cycle map that will be constructed in the next section will in fact be a map from algebraic cycles to a quotient of \'etale bordism. We define \'etale bordism of a pointed scheme $X$ to be the profinite homology theory represented by $\hMU$ in $\hSHh$ applied to $\hEt X$, i.e.
$$\hMU^{\et}_{n}(X):=\Hom_{\hSHh}(S^n,\hMU\wedge \Sigma^{\infty}(\hEt X)).$$
If $X$ is not equipped with a specified basepoint, we will denote by $\hMU^{\et}_{n}(X)$ the reduced \'etale bordism of $\hEt X_+$.\\
We need also a more refined version of bordism. The cycle map that will be defined in the next section takes values in Borel-Moore \'etale bordism rather than cobordism. We will define it in the same way as Friedlander defined Borel-Moore \'etale homology in \cite{fried}, Proposition 17.2. If $X$ is a scheme of finite type over $k$, let $\oX$ be a compactification of $X$.  Such a compactification always exists by Nagata's Theorem \cite{nagata}. Then we denote by 
$$\hMU_{\ast}^{BM,\et}(X):=\hMU_{\ast}(\hEt \oX,\hEt (\oX-X))$$
the \'etale topological Borel-Moore bordism of $X$. 
\begin{lemma}\label{compactification}
\'Etale Borel-Moore bordism does not depend on the choice of a compactification. A proper map $f:X \to Y$ induces a pushforward map $$f_{\ast}:\hMU_{\ast}^{BM,\et}(X) \to \hMU_{\ast}^{BM,\et}(Y).$$  
\end{lemma}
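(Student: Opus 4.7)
The plan is to carry out the standard Borel--Moore construction transposed into the profinite stable setting, using proper base change in \'etale cohomology as the essential input. Because the $\Zl$-model structure on $\hShp$ fixed at the start of this section detects weak equivalences by continuous $\Zl$-cohomology, a cohomological isomorphism can be upgraded directly to a weak equivalence of profinite pointed spaces.

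For the independence statement, I would first reduce, by the standard graph-closure trick, to the case of two compactifications connected by a single proper morphism $g:\oX'\to\oX$ restricting to the identity on $X$. Given $\oX_1,\oX_2$, take $\oX$ to be the scheme-theoretic closure of the image of $(j_1,j_2):X\to\oX_1\times_k\oX_2$; the two projections $\oX\to\oX_i$ are proper and restrict to the identity on $X$, the latter because the image of $(j_1,j_2)$ is already closed in $j_1(X)\times_k\oX_2$ by separatedness of $\oX_2$. For such a $g$, write $j:X\to\oX$ and $j':X\to\oX'$ for the open immersions with closed complements $Z$ and $Z'$. Since $g$ is an isomorphism over $X$, proper base change yields $Rg_{\ast}(j'_!\Zl)\cong j_!\Zl$ (its restriction to $Z$ vanishes because $(j'_!\Zl)|_{Z'}=0$, and its restriction to $X$ is $\Zl$). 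Consequently the map of cofibres $\hEt\oX'/\hEt Z'\to\hEt\oX/\hEt Z$ induced by $g$ is an isomorphism on continuous $\Zl$-cohomology, hence a weak equivalence in $\hShp$; applying $\hMU_{\ast}$ gives the desired independence.

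For functoriality, given $f:X\to Y$ proper and a compactification $\oY$ of $Y$, I would construct $\oX$ and an extension $\bar f:\oX\to\oY$ by choosing any compactification $\oX_0$ of $X$ (Nagata) and letting $\oX$ be the scheme-theoretic closure in $\oX_0\times_k\oY$ of the graph $\Gamma_f$ of $f$. Then $\oX$ is proper over $k$, contains $X=\Gamma_f$ as an open subscheme, and carries the projection $\bar f:\oX\to\oY$ extending $f$. The valuative criterion of properness applied to $f$ shows that $\bar f^{-1}(Y)=X$: any morphism $\Spec R\to\oX$ from a discrete valuation ring whose generic point lies in $X$ and whose $\bar f$-image lies in $Y$ lifts uniquely through $f:X\to Y$, forcing the special point to lie in $X$. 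Hence $\bar f(\oX-X)\subseteq\oY-Y$, so $\bar f$ yields a morphism of pairs and, after $\hMU_{\ast}\circ\hEt$, the desired pushforward $f_{\ast}:\hMU^{BM,\et}_{\ast}(X)\to\hMU^{BM,\et}_{\ast}(Y)$. Independence from the auxiliary choices of $\oX_0$ and of the extension follows by comparing two constructions via a common refinement inside the appropriate fibre product and invoking the first part of the lemma.

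The main technical hurdle is the cofibre-comparison step: one must upgrade the cohomological output of proper base change into a genuine weak equivalence of profinite pointed spaces. The choice of the $\Zl$-model structure is precisely what makes this automatic, which is why working with $\hEt$ rather than the uncompleted pro-simplicial $\Et$ is essential here.
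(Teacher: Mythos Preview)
Your proposal is correct and follows essentially the same route as the paper. The paper's proof is terse: it invokes Friedlander's Proposition~17.2 both for the reduction to a single dominating proper morphism between compactifications and for the resulting isomorphism of cofibres in $\hHh$, and then constructs the pushforward via the graph-closure compactification exactly as you do. What you have done is unpack Friedlander's black box---your graph-closure trick and separatedness argument reproduce the domination step, and your proper-base-change computation $Rg_{\ast}(j'_!\Zl)\cong j_!\Zl$ is precisely the cohomological content behind Friedlander's cofibre isomorphism. Your explicit valuative-criterion check that $\bar f^{-1}(Y)=X$ is a detail the paper leaves implicit. So the two arguments coincide in substance; yours is simply more self-contained.
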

\begin{proof}
Let $j_1:X \into \oX_1$ and $j_2:X\into \oX_2$ be two compactifications. The argument in the proof of Proposition 17.2 in \cite{fried} shows that we can assume that $j_1$ maps to $j_2$ via a proper map $\bar{f}:\oX_2 \to \oX_1$ restricting to an isomorphism $j_2(X) \stackrel{\sim}{\to} j_1(X)$ and a map $Y_2:=\oX_2- j_2(X) \to \oX_1 -j_1(X)=:Y_1$. Moreover, Proposition 17.2 of \cite{fried} implies that $\hEt \bar{f}$ induces an isomorphism $\hEt \oX_2/\hEt Y_2 \cong \hEt \oX_1/\hEt Y_1$ in $\hHh$. Thus $\hEt \bar{f}$ induces an isomorphism in any homology theory.\\
Now let $f$ be a proper morphism. We argue as in \cite{fried}, 17.2. Let $j_0:X \into \oX_1$ and $j_1:Y\into \oY$ be compactifications, then $f$ induces $\bar{f}:(\oX_2,\oX_2-j_2(X)) \to (\oY,\oY-j_1(Y))$, where $j_2:X\into \oX_2$ is the compactification defined by $\Delta:X \to \oX \times \oY$. Hence $\bar{f}$ induces a pushforward map in \'etale bordism for these pairs.
\end{proof}

By the work of Cox \cite{cox1}, \cite{cox2} and Friedlander \cite{fried}, for any closed immersion $X\into Y$ of locally noetherian schemes, there is an \'etale tubular neighborhood of $X$ in $Y$ denoted by $T_{Y/X}$. Let $\psi_{X}$ be the functor that sends an \'etale covering $U\to Y$ to the union of those connected components $U^{\alpha}$ of $U$ with $U^{\alpha}\times_XY$ is nonempty. Then $T_{Y/X}$ is defined to be the pro-object of simplicial schemes
$$T_{Y/X}=\{\psi_Y(U_{\cdot}); U_{\cdot}\in HRR(Y)\}.$$
So $T_{Y/X}$ collects those hypercoverings of $Y$ that intersect the hypercoverings of $X$. We get its \'etale topological type $\Et T_{Y/X}$ by applying the connected component functor to each $\psi_Y(U_{\cdot})$. It is a pro-simplicial set and we let $\hEt T_{Y/X}$ be the associated profinite space. An important fact is that $\hEt T_{Y/X}$ is weakly equivalent to $\hEt X$ and that there are weak equivalences 
\begin{equation}\label{tubularexcision}
(\hEt Y,\hEt X) \simeq (\hEt (T_{Y/Y}\times_Y (Y-X)), \hEt (T_{Y/X}\times_Y (Y-X))) 
\end{equation}
of pairs of profinite spaces, see \cite{fried} \S 15. This excision property (\ref{tubularexcision}) of tubular neighborhoods allows us to define a cap product for any \'etale homology theory as in \cite{fried}.
\begin{prop}\label{capproduct}
Let $i:X \into Y$ be a closed immersion of schemes over $k$. For each $n\geq p \geq 0$, there is a natural cap product pairing
$$\hMU_{\et}^{p}(Y,Y-X)\otimes \hMU^{BM,\et}_{n}(Y) \to \hMU^{BM,\et}_{n-p}(X).$$
\end{prop}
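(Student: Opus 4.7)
The plan is to imitate the classical construction of the Borel--Moore cap product, with the ring structure of $\hMU$ and Friedlander's \'etale tubular neighborhoods playing the roles of the standard topological ingredients. First I would choose a compactification $\oY$ of $Y$ via Nagata, and let $\oX$ denote the closure of $X$ in $\oY$. Since $X$ is closed in $Y$ one has $\oX \cap Y = X$, so $\oX$ is itself a compactification of $X$. With these choices one has, by definition,
$$\hMU^{BM,\et}_{n}(Y) = \hMU_{n}(\hEt \oY / \hEt(\oY - Y)), \qquad \hMU^{BM,\et}_{n-p}(X) = \hMU_{n-p}(\hEt \oX / \hEt(\oX - X)),$$
and the tubular neighborhood excision (\ref{tubularexcision}) lets us rewrite
$$\hMU^{p}_{\et}(Y, Y-X) \;\cong\; \hMU^{p}(\hEt \oY / \hEt(\oY - X)).$$

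The heart of the argument is the construction of a natural relative diagonal of pointed profinite spaces
$$\Delta:\; \hEt \oY / \hEt(\oY - Y) \;\longrightarrow\; \bigl(\hEt \oY / \hEt(\oY - X)\bigr) \wedge \bigl(\hEt \oX / \hEt(\oX - X)\bigr).$$
On the level of schemes this would come from the geometric diagonal $\oY \to \oY \times \oY$ composed with a retraction onto $\oX$ in the second factor; in the \'etale setting the required retraction is provided, up to weak equivalence in $\hSh$, by the Cox--Friedlander tubular neighborhood $T_{\oY/\oX}$, whose \'etale topological type is weakly equivalent to $\hEt \oX$. The combinatorial check that is needed is that the composite sends $\hEt(\oY - Y)$ into the union of the two ``wedge boundary'' subspaces, and this is immediate from the inclusions $\oY - Y \subseteq \oY - X$ and $\oX \cap (\oY - Y) = \oX - X$. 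With $\Delta$ in hand, the cap product of classes $\alpha \in \hMU^{p}_{\et}(Y, Y-X)$ and $\beta \in \hMU^{BM,\et}_n(Y)$ is defined by smashing their representing morphisms with $\Delta$ in the usual way and composing with the multiplication $\mu : \hMU \wedge \hMU \to \hMU$; the resulting morphism $S^n \to \hMU \wedge S^p \wedge \hEt\oX/\hEt(\oX - X)$ represents an element of $\hMU^{BM,\et}_{n-p}(X)$. Bilinearity is built into the construction, and naturality together with independence of the chosen compactification follow from the functoriality arguments already used in Lemma \ref{compactification}.

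The main obstacle is the rigorous construction of $\Delta$. We are working in $\hSh$ rather than with honest topological spaces, so the geometric diagonal and the tubular neighborhood retraction are defined only up to weak equivalence on the pro-object side, and one has to assemble them into a single map in the pointed profinite homotopy category $\hHhp$ that respects the subspaces being collapsed. The excision statement (\ref{tubularexcision}), together with its functoriality in closed immersions of schemes and in inclusions of open subschemes, provides exactly the coherence required; once $\Delta$ has been produced in $\hHhp$, the remainder of the verification parallels the classical topological setting line by line.
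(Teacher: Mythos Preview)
Your overall strategy---build a relative diagonal and then contract via the ring multiplication $\mu$ on $\hMU$---is the same as the paper's (this is exactly the Adams construction cited there). The gap is in your construction of $\Delta$: the ``retraction onto $\oX$'' you invoke does not exist. The Cox--Friedlander tubular neighborhood gives a weak equivalence $\hEt T_{\oY/\oX}\simeq \hEt \oX$, but the natural map runs $\hEt T_{\oY/\oX}\to \hEt \oY$, not the other way; there is no map $\hEt \oY \to \hEt T_{\oY/\oX}$ (or $\hEt \oY\to\hEt\oX$) in $\hHhp$ in general, so the second factor of your diagonal is undefined. Your combinatorial check that $\oY-Y$ lands in the basepoint via the first factor is fine, but that does not produce the missing map on the second factor. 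A related symptom is your use of $\oY - X$: since $X$ is only locally closed in $\oY$, this complement is neither open nor closed, so the pair $(\oY,\oY-X)$ is not of the standard type and your excision identification of $\hMU^{p}_{\et}(Y,Y-X)$ with $\hMU^p(\hEt\oY/\hEt(\oY-X))$ is not justified as written.

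The paper repairs this not by producing a retraction but by arranging, via excision, that all three pairs live over a \emph{single} ambient space, after which the honest diagonal of that space suffices. Concretely, it takes as compactification of $X$ not the closure $\oX$ but the closed subscheme $W:=\oY-(Y-X)$ of $\oY$, with $V:=W-X=\oY-Y$; one has $(\hEt W,\hEt V)\simeq(\hEt\oX,\hEt(\oX-X))$ by Lemma~\ref{compactification}. Setting $A:=\hEt(T_{\oY/W}\times_{\oY}Y)$, the excision property (\ref{tubularexcision}) identifies
\[
(\hEt Y,\hEt(Y-X))\simeq (A,B_1),\quad (\hEt W,\hEt V)\simeq (A,B_2),\quad (\hEt\oY,\hEt(\oY-Y))\simeq (A,B_1\cup B_2),
\]
for suitable $B_1,B_2\subset A$. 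Now the ordinary diagonal $A\to A\wedge A$ descends to the required $\Delta: A/(B_1\cup B_2)\to (A/B_1)\wedge(A/B_2)$, and the Adams pairing goes through verbatim. So the missing idea is: use tubular neighborhoods for \emph{excision to a common ambient}, not for a retraction onto the subvariety.
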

\begin{proof}
The cap product can be defined as in \cite{fried}, Proposition 17.4, using \'etale tubular neighborhoods and the usual construction of cap products for generalized relative homology theories in \cite{adams}. Consider the sequences of embeddings
$$X\stackrel{i}{\to}Y\stackrel{j}{\to}\bar{Y}, X \stackrel{j'}{\to} W \stackrel{i'}{\to} \bar{Y}$$
where $j$ is a compactification of $Y$, $i'$ is the closed immersion of $W:=\bar{Y}-(Y-X)$ into $\bar{Y}$ and $j'$ is the associated open immersion. For convenience, we set $V:=W-X=\bar{Y}-Y$. The above excision formula (\ref{tubularexcision}) yields the identifications of pairs in $\hHh$:
$$(\hEt Y,\hEt Y-X) \cong (\hEt (T_{\bar{Y}/W}\times_{\bar{Y}} Y), \hEt (T_{\bar{Y}/W}\times_{\bar{Y}} (Y-X))),$$
$$(\hEt W,\hEt V) \cong (\hEt (T_{\bar{Y}/W}\times_{\bar{Y}} Y), \hEt (T_{\bar{Y}/V}\times_{\bar{Y}} Y)),$$
$$(\hEt \bar{Y},\hEt (\bar{Y}-Y)) \cong (\hEt (T_{\bar{Y}/W}\times_{\bar{Y}} Y), \hEt (T_{\bar{Y}/W}\times_{\bar{Y}} (Y-X)) \cup \hEt (T_{\bar{Y}/V}\times_{\bar{Y}} Y)).$$
Moreover, we know $(\hEt W,\hEt V)\cong (\hEt \bar{X},\hEt (\bar{X}-X))$ in $\hHh$ for a compactification $\bar{X}$ of $X$, see \cite{fried}, proof of Proposition 17.4. Finally, we apply the usual cap product construction of \cite{adams}, Part III \S 9, via the slant product, to  
$$\hEt (T_{\bar{Y}/W}\times_{\bar{Y}} (Y-X)) \into \hEt (T_{\bar{Y}/W}\times_{\bar{Y}} Y)~\mathrm{and}$$
$$ \hEt (T_{\bar{Y}/V}\times_{\bar{Y}} Y) \into \hEt (T_{\bar{Y}/W}\times_{\bar{Y}} Y).$$ 
\end{proof}
\subsection{Poincar\'e duality}
Both theories, \'etale bordism and cobordism, are closely related by the following Poincar\'e duality theorem for smooth projective varieties over $k$, where variety means a reduced and irreducible   scheme that is spearated and of finite type over $k$. 
\begin{theorem}\label{duality}
Let $X$ be a smooth projective variety over $k$ of dimension $n$. Then there is a natural Poincar\'e duality isomorphism
$$D^X:\hMU_{p}^{\et}(X) \stackrel{\cong}{\to} \hMU_{\et}^{2n-p}(X).$$
We define the fundamental class $[X] \in \hMU_{2n}^{\et}(X)$ of $X$ to be the inverse image of $1 \in \hMU_{\et}^0(X)$ under the above duality map $D^X$. The inverse $D_X$ of $D^X$ is given by the cap-product with $[X]$.
\end{theorem}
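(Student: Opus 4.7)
The plan is to mimic the classical Pontryagin-Thom construction of the fundamental class of a closed complex manifold, carried out inside $\hSHh$ via an embedding into a projective space, and then to establish that cap product with this class is an isomorphism by comparing the Atiyah-Hirzebruch spectral sequences for $\hMU_{\et}$ with the classical $\ell$-adic Poincar\'e duality. The profinite \'etale homotopy type $\hEt X$ of a smooth projective $n$-fold, viewed in $\hSh$ with the $\Zl$-model structure, behaves like a closed oriented topological manifold of real dimension $2n$; this is exactly what makes the topological recipe succeed.

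\textbf{Construction of the fundamental class.} Since $X$ is projective, fix a closed immersion $i:X\hookrightarrow\Pro_k^N$ for some large $N$, and let $\nu$ be its normal bundle, of rank $r=N-n$. Combining the \'etale tubular-neighborhood equivalence~(\ref{tubularexcision}) with the Thom-space description exploited in the proof of Theorem~\ref{orientation}, there is a weak equivalence of pointed profinite spaces $\hEt\Pro^N/\hEt(\Pro^N-X)\simeq \Thom(\nu)$. Since $\hMU_{\et}$ is oriented (Theorem~\ref{orientation}), the bundle $\nu$ carries a Thom class and a Thom isomorphism
$$\Phi_\nu:\hMU^{\et}_{p}(X)\stackrel{\sim}{\to}\hMU^{\et}_{p+2r}(\Thom(\nu)).$$
Projective space lifts canonically to characteristic zero, so the comparison of \'etale and classical homotopy types supplies a canonical class $[\Pro_k^N]\in\hMU^{\et}_{2N}(\Pro_k^N)$ inherited from the classical fundamental class of $\C P^N$. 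I then \emph{define}
$$[X]\,:=\,\Phi_\nu^{-1}\bigl(c_*[\Pro_k^N]\bigr)\,\in\,\hMU^{\et}_{2n}(X),$$
where $c_*$ is induced by the collapse $\hEt\Pro^N_{+}\to \Thom(\nu)$. Independence of $[X]$ from the choice of embedding follows by comparing two embeddings through a common refinement $\Pro_k^N\hookrightarrow\Pro_k^{N+M}$ and invoking the multiplicativity of Thom classes for a sum of normal bundles.

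\textbf{The duality isomorphism.} Applying Proposition~\ref{capproduct} with $Y=X$ yields a cap product $\hMU_{\et}^p(X)\otimes\hMU^{\et}_n(X)\to\hMU^{\et}_{n-p}(X)$, since $X$ is proper (so Borel-Moore and ordinary \'etale bordism coincide). Set $D_X(\alpha)=\alpha\cap[X]$ and declare $D^X$ its inverse. To see $D_X$ is an isomorphism, compare the two Atiyah-Hirzebruch spectral sequences
$$E_2^{p,q}=H_{\et}^p(X;\hMU^q)\Rightarrow\hMU_{\et}^{p+q}(X),\qquad \tilde E^2_{p,q}=H^{\et}_p(X;\hMU^q)\Rightarrow\hMU^{\et}_{p-q}(X),$$
both of which converge strongly because $\hEt X$ has the homotopy type of a finite simplicial set by Proposition~\ref{finiteness}. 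Cap product with $[X]$ induces a morphism of these spectral sequences which on $E_2$ specializes to the classical $\ell$-adic Poincar\'e duality isomorphism for $X$ smooth projective over an algebraically closed field. Hence it is an isomorphism on $E_2$, therefore on $E_\infty$, and, by strong convergence, on the abutments.

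\textbf{Main obstacle.} The delicate technical point is coherence of the Thom-class constructions. One must verify that the $\hMU$-orientation produced in Theorem~\ref{orientation} from the \'etale realization of the $MGL$-orientation, when restricted to the normal bundle of a smooth closed immersion, yields a Thom class compatible (under the tubular-neighborhood equivalence~(\ref{tubularexcision})) with the collapse map used to define $[X]$ and with the cap product of Proposition~\ref{capproduct}. This coherence is what makes the map of Atiyah-Hirzebruch spectral sequences agree on $E_2$ with classical Poincar\'e duality. Once it is in place, naturality packages together the identification $D^X=(D_X)^{-1}$, independence of $[X]$ from the embedding, and the unit property $1\cap[X]=[X]$ which fixes $D^X(1)=[X]$.
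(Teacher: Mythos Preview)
Your argument is essentially correct, and the spectral-sequence comparison to Friedlander's $\ell$-adic Poincar\'e duality is exactly what the paper does to prove that the duality map is an isomorphism. Where you differ is in the \emph{construction} of the map. The paper does not build $[X]$ first and then cap with it; instead it defines $D^X$ directly by the slant product with the Thom class of the normal bundle of the \emph{diagonal} $\Delta(X)\hookrightarrow X\times X$ (via homotopy purity in $\hSHh$), proves this slant-product map is an isomorphism using the Atiyah--Hirzebruch spectral sequence, and only \emph{afterwards} defines $[X]:=(D^X)^{-1}(1)$, invoking Adams to identify $(D^X)^{-1}$ with cap product. This has the advantage of being self-contained: no prior fundamental class for $\Pro^N$ is needed, and no independence-of-embedding argument is required. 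Your Pontryagin--Thom route via a projective embedding is a legitimate alternative and is perhaps more geometric, but it introduces a bootstrap you must discharge---namely producing $[\Pro^N_k]$ and checking that its image in $H^{\et}_{2N}(\Pro^N_k;\Zell)$ is the usual class, so that on $E_2$ your cap-product map really is classical Poincar\'e duality. Your ``main obstacle'' paragraph correctly flags this coherence issue; the paper sidesteps it entirely because the diagonal construction makes the $E_2$-identification with Friedlander's duality immediate from the fact that $\hMU\to H\Zell$ is a map of oriented spectra.
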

Before we prove this theorem, we make the following convention.
\begin{convention}\label{choice}
Let $k$ be our algebraically closed base field and $\ell$ the chosen prime different from the characteristic of $k$. For a positive integer $n\geq 1$, let $\mu_{\ell^n}(k)$ denote the group of $\ell^n$th roots of unities in $k$. For the rest of this paper, we fix a choice of a compatible system of $\ell^n$th roots of unities in $k$ for all $n \geq 1$ and use the induced isomorphism to make the  identification $\Zell\cong \Zell(1)=\lim_n \mu_{\ell^n}(k)$. 
\end{convention}
\begin{proof}
We use the classical construction for this duality map in topology and define the map $D^X$ via the slant product for generalized homology theories, see \cite{adams}. The orientation map $\hEt \Pro^{\infty} \to \hMU$ induces an orientation for every vector bundle in $\Sm_k$. In particular, for the normal bundle $N$ of the embedding of the diagonal $\Delta(X)\into X\times X$, which is isomorphic to the tangent bundle of $X$, we get an induced map map $\omega:\hEt \mathrm{Th}(N) \to \Sigma^{2n}\hMU$ in $\hSHh$. By homotopy purity, we know $X\times X/X\times X - \Delta(X)$ is $\A^1$-equivalent to $\mathrm{Th}(N)$, see \cite{hu} \S 3. This map induces a map in $\hSHh$
$$\hEt (X \times X) \to \hEt (X \times X/X \times X - \Delta(X)) \to \Sigma^{2n}\hMU.$$
By abuse of notation, we will also denote this composed map by $\omega$. Now given an element $g \in \hMU_{p}^{\et}(X)$, represented by a map $g:S^p \to \hMU \wedge \hEt X$, the slant product $\omega/g \in \hMU_{\et}^{2d-p}(X)$ is defined as the following composite of maps in $\hSHh$, where we omit to denote the twists,
$$S^p \wedge \hEt X \stackrel{g\wedge 1}{\to} \hMU \wedge \hEt X \wedge \hEt X \stackrel{1 \wedge \omega}{\to} \hMU \wedge \Sigma^{2n}\hMU \stackrel{\mu}{\to} \Sigma^{2n}\hMU$$
where $\mu:\hMU \wedge \hMU \to \hMU$ is the multiplication map of the ring spectrum $\hMU$. We define the map $D^X$ by sending $g$ to $D^X(g):=\omega/g$.\\
In order to prove that $D^X$ is an isomorphism, we apply the following diagram of Atiyah-Hirzebruch spectral sequences
$$\xymatrix{
H_{\et}^{2n-s}(X; \hMU_t)\ar[d]_{D^X} \ar@{=>}[r] & \hMU_{\et}^{2n-s-t}(X) \ar[d]^{D^X}\\
H^{\et}_{s}(X; \hMU_t) \ar@{=>}[r] & \hMU^{\et}_{s+t}(X)}$$
where $H^{\et}_{s}(X; \hMU_t)$ denotes continuous \'etale homology as defined in \cite{profinhom} and $H_{\et}^{2n-s}(X; \hMU_t)$ denotes continuous cohomology of $\hEt X$ as defined in \cite{profinhom}. As we remarked above, $H_{\et}^{\ast}(X; \Zell)$ coincides with Dwyer-Friedlander's \cite{dwyfried} and Jannsen's \cite{jannsen} continuous cohomology of $X$. Since $X$ is smooth, these groups are equal to the usual $\ell$-adic cohomology $\lim_{\nu} H_{\et}^{\ast}(X; \Z/\ell^{\nu})$. Since $MU_t$ is a free abelian group for every $t$, the groups $H_{\et}^{2n-s}(X; \hMU_t)$ are isomorphic to $\lim_{\nu} H_{\et}^{\ast}(X; \Z/\ell^{\nu})\otimes MU_t$. The construction of $D^X$ and the fact that the map $\hMU \to H\Zell$ is a map of oriented spectra show that the same construction of $D^X$ in homology yields a natural map between the two spectral sequences. Since $k$ is algebraically closed, the $E^2$-terms vanish except for $0 \leq s \leq 2n$ and the spectral sequences converge. Finally, the Poincar\'e duality of \cite{fried}, Theorem 17.6, 
shows that the $E^2$-terms of the spectral sequences are isomorphic. This proves that $D^X$ on bordism is an isomorphism. That the inverse $D_X$ is given by the cap product with $[X]$ now follows just as in \cite{adams}, Part III, Proposition 10.16.
\end{proof}  
\begin{prop}\label{pullback}
Let $i:X\into Y$ be a closed embedding of smooth schemes of codimension $d$. Let $\omega_{X/Y}\in \hMU^{2d}_{\et}(Y,Y-X)$ be the element corresponding to the Thom class of the normal bundle $N$ under the purity isomorphism $\hMU^{2d}_{\et}(Y,Y-X)\cong \hMU^{2d}_{\et}(N,N-X)$, cf. \cite{mv} Theorem 3.2.23. We call $\omega_{X/Y}$ the orientation class of the embedding. Then the cap product with $\omega_{X/Y}$ defines a natural pullback map 
$$i^{\ast}:\hMU^{BM,\et}_{j}(Y) \to \hMU^{BM,\et}_{j-2d}(X)$$
on \'etale Borel-Moore bordism. If $X$ and $Y$ are smooth projective varieties, this pullback coincides with the induced pullback from \'etale cobordism by Poincar\'e duality.
\end{prop}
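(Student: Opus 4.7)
The plan is to define $i^*$ as the cap product with $\omega_{X/Y}$ via Proposition~\ref{capproduct}, verify its naturality from the naturality of the cap product and of Thom classes, and establish the compatibility with Poincar\'e duality in the projective smooth case by reducing to the Gysin relation $[X]=\omega_{X/Y}\cap[Y]$.

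Specializing the pairing of Proposition~\ref{capproduct} to $p=2d$ and capping against $\omega_{X/Y}$ gives a well-defined map $i^*(g):=\omega_{X/Y}\cap g$ from $\hMU^{BM,\et}_{j}(Y)$ to $\hMU^{BM,\et}_{j-2d}(X)$. Naturality in the embedding is inherited from the slant-product construction of the cap product inside $\hSHh$ (following~\cite{adams} Part~III, \S9) together with the naturality of Thom classes under pullback of vector bundles. Functoriality under a composition $X\hookrightarrow Z\hookrightarrow Y$ of closed embeddings of smooth schemes follows from the short exact sequence $0\to N_{X/Z}\to N_{X/Y}\to i^*_{XZ}N_{Z/Y}\to 0$ of normal bundles together with the multiplicativity of Thom classes in the oriented theory $\hMU$ guaranteed by Theorem~\ref{orientation}, which yields $\omega_{X/Y}=\omega_{X/Z}\cup i^*\omega_{Z/Y}$ and hence $i_{XZ}^*\circ i_{ZY}^*=i_{XY}^*$.

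For the compatibility with Poincar\'e duality, assume $X$ and $Y$ are smooth projective of dimensions $n-d$ and $n$, so that Borel-Moore bordism agrees with bordism. I want to show that the square
$$\xymatrix{
\hMU^{\et}_{j}(Y)\ar[d]_{i^*}\ar[r]^{D^Y} & \hMU_{\et}^{2n-j}(Y)\ar[d]^{i^*_{\mathrm{cob}}} \\
\hMU^{\et}_{j-2d}(X)\ar[r]^{D^X} & \hMU_{\et}^{2n-j}(X)
}$$
commutes, where $i^*_{\mathrm{cob}}$ is the contravariant functoriality of $\hMU_{\et}^{\ast}$. Since by Theorem~\ref{duality} the inverses $D_Y,D_X$ are cap products with the fundamental classes, the commutativity of the square is equivalent to the identity
$$\omega_{X/Y}\cap(\alpha\cap[Y])=(i^*_{\mathrm{cob}}\alpha)\cap[X] \quad \text{for all } \alpha\in \hMU_{\et}^{2n-j}(Y).$$
Associativity of the cap/cup product, combined with the $\hMU_{\et}^{\ast}(X)$-module structure on $\hMU_{\et}^{\ast}(Y,Y-X)$ coming from the purity isomorphism, rewrites the left-hand side as $i^*_{\mathrm{cob}}\alpha \cap (\omega_{X/Y}\cap[Y])$, so the whole assertion reduces to the Gysin identity $[X]=\omega_{X/Y}\cap[Y]$ in $\hMU^{\et}_{2(n-d)}(X)$.

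This Gysin identity is the main obstacle I anticipate. I would prove it by combining the characterization of $[X]$ in Theorem~\ref{duality} as the unique class with $D^X([X])=1$ with the decomposition $i^*TY\cong TX\oplus N_{X/Y}$ of the tangent bundle; unwinding the slant-product definition of $D^X$ and applying the multiplicativity of Thom classes in $\hMU$, the identity reduces, via $\A^1$-homotopy purity (\cite{mv} Theorem~3.2.23) applied inside $\hSHh$, to the compatibility of the $MGL$-orientation with the $\hMU$-orientation established in Theorem~\ref{orientation}. The real difficulty is bookkeeping all of the relevant structure maps coherently in the profinite stable homotopy category --- the purity equivalence $\hEt Y/\hEt(Y-X)\simeq \hEt\Thom(N_{X/Y})$, the diagonal of $Y$, the Thom isomorphism, and the ring multiplication of $\hMU$ --- well enough to identify the slant product computing $D^X(\omega_{X/Y}\cap[Y])$ with the unit $1\in\hMU_{\et}^{0}(X)$. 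Once this compatibility is arranged, the verification is formal.
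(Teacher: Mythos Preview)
Your argument is correct, but the paper organizes the compatibility step differently and thereby sidesteps the bookkeeping you flag as the main obstacle. Rather than reducing to the Gysin identity $[X]=\omega_{X/Y}\cap[Y]$ and then verifying it by unwinding the slant-product definition of $D^X$, the paper appeals directly to the general fact (from \cite{adams}, Part~III) that Poincar\'e duality translates the relative cap-product pairing into the corresponding relative cup-product pairing. Concretely, once $X$ and $Y$ are projective, the tubular-neighborhood identifications from Proposition~\ref{capproduct} reduce to the pairs $T_{Y/X}\times_Y(Y-X)\hookrightarrow T_{Y/X}\times_Y Y$ and $T_{Y/\emptyset}\times_Y Y\hookrightarrow T_{Y/X}\times_Y Y$; under $D^Y$ and $D^X$ the cap product with $\omega_{X/Y}$ becomes cup product with $\omega_{X/Y}$, and the identification of that cup product with $i^*_{\mathrm{cob}}$ is then nothing other than the Thom isomorphism for the oriented theory $\hMU_{\et}$ (Theorem~\ref{orientation}). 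Your Gysin identity is of course equivalent to this---it is what the Thom isomorphism plus duality say when evaluated at $1\in\hMU_{\et}^0(Y)$---so the two routes agree; the paper's packaging simply absorbs the delicate coherence checks you anticipate into the standard Adams machinery, while your approach makes the key geometric input $[X]=\omega_{X/Y}\cap[Y]$ explicit. The extra material you supply on naturality and functoriality under composition of embeddings is correct and more detailed than what the paper records, which treats those points as immediate.
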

\begin{proof}
The only assertion in this proposition is the one for the case that $i$ is a closed embedding of smooth projective varieties. For $X$ and $Y$ projective, the pairs used to define a cap product pairing are given by the \'etale homotopy types of the inclusions
$$T_{Y/X}\times_Y(Y-X) \into T_{Y/X}\times_Y Y~\mathrm{and}$$
$$T_{Y/\emptyset}\times_Y Y \into T_{Y/X}\times_Y Y.$$
The cap product pairing is translated by $D^Y$ and $D^X$ respectively into the corresponding cup product pairing. That this cup product pairing, induced by multiplication with the class $\omega_{X/Y}$, coincides with the pullback map for cobordism now follows from the Thom isomorphism theorem for an oriented cohomology theory. 
\end{proof}
\section{The cycle class map}
The goal of this section is to prove the following analogue of Theorem 3.1 of \cite{totaro} for a base field of arbitrary characteristic. By Poincar\'e duality, this proves Theorem \ref{maintheoremintro} of the introduction for a smooth projective variety $X$. 
\begin{theorem}\label{maintheorem}
Let $k$ be an algebraically closed field and let $X$ be a quasiprojective scheme of finite type over $k$. Let $\ell$ be a prime different from $p$. There is a natural map $\clMU:Z_iX \to \hMU^{BM,\et}_{2i}(X)\otimes_{\hMU_{\ast}} \Zell$ from the group of algebraic cycles of dimenson $i$ on $X$ that vanishes on cycles algebraically equivalent to zero such that the composition
$$Z^{\mathrm{alg}}_i X \stackrel{\clMU}{\longrightarrow} \hMU^{BM,\et}_{2i}(X)\otimes_{\hMU_{\ast}} \Zell \stackrel{\theta}{\longrightarrow} H^{BM,\et}_{2i}(X;\Zell)$$
is the cycle class map $\clH$ to \'etale homology of \cite{fried} Proposition 17.4, where $Z^{\mathrm{alg}}_iX$ denotes the group of cycles modulo algebraic equivalence. This map $\clMU$ is natural with respect to projective morphisms.
\end{theorem}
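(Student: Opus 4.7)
I follow the strategy of Totaro's proof in \cite{totaro}, with Gabber's smooth alterations of degree prime to $\ell$ replacing resolutions of singularities. The key technical ingredients are already in place: Theorem \ref{duality} provides fundamental classes for smooth projective varieties; Lemma \ref{compactification} and Proposition \ref{pullback} give proper pushforwards and regular pullbacks in \'etale Borel-Moore bordism; and Theorem \ref{quillenstheorem} controls the low-dimensional behaviour of $\hMU$ relative to $H\Zell$.

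On a prime cycle $Z\subset X$ of dimension $i$, I define $\clMU$ as follows. By Gabber's theorem, choose a projective alteration $\pi:Z'\to Z$ of some degree $d$ prime to $\ell$, with $Z'$ smooth projective of dimension $i$ over $k$. Composing $\pi$ with the closed immersion $j:Z\into X$ and using the fundamental class of Theorem \ref{duality}, set
$$\clMU(Z) := \tfrac{1}{d}(j\pi)_{\ast}[Z']\in \hMU^{BM,\et}_{2i}(X)\otimes_{\hMU_{\ast}} \Zell,$$
which is well-defined since $d$ is invertible in $\Zell$; then extend by linearity to $Z_iX$. To show independence of the alteration, given two such $\pi_a:Z'_a\to Z$ ($a=1,2$), apply Gabber once more to the reduced fibre product $Z'_1\times_Z Z'_2$ to obtain a smooth projective variety $W$ with projective surjections $q_a:W\to Z'_a$ of degrees prime to $\ell$. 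The needed identity $(q_a)_{\ast}[W]=\deg(q_a)\,[Z'_a]$ in $\hMU^{\et}_{2i}(Z'_a)\otimes_{\hMU_{\ast}}\Zell$ follows from the classical degree formula in \'etale homology together with injectivity of $\theta$ in top degree on smooth projective varieties of dimension $i$ (via Poincar\'e duality and Theorem \ref{quillenstheorem}). Dividing everything by the relevant degrees then gives the same class of $Z$.

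For vanishing on cycles algebraically equivalent to zero, by definition it suffices to consider a smooth projective connected curve $T$, an irreducible cycle $W\subset T\times X$ flat over $T$ of the correct relative dimension, and two closed points $t_1,t_2\in T$, and to show $\clMU(W_{t_1})=\clMU(W_{t_2})$. Compatibility of $\clMU$ with the pullback along $i_{t_j}:\{t_j\}\times X\into T\times X$, an application of Proposition \ref{pullback} together with the construction of $\clMU$, reduces this to showing $i_{t_1}^{\ast}=i_{t_2}^{\ast}$ as maps on $\hMU^{BM,\et}_{\ast}(T\times X)\otimes_{\hMU_{\ast}}\Zell$. By Proposition \ref{pullback}, these pullbacks are cap products with the orientation classes $\omega_{\{t_j\}/T\times X}$, which are external products with $\omega_{\{t_j\}/T}\in \hMU^{2}_{\et}(T,T-\{t_j\})$; by Poincar\'e duality on $T$, the latter correspond to the fundamental classes of the two points in $\hMU^{\et}_0(T)$. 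By Proposition \ref{finiteness} and Theorem \ref{quillenstheorem}, $\hMU^{\et}_0(T)\otimes_{\hMU_{\ast}}\Zell\cong H^{\et}_0(T;\Zell)=\Zell$, and in this group any two closed points of the connected curve $T$ represent the same class.

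Finally, $\theta\circ\clMU=\clH$ because the orientation of $\hMU$ is by construction the image under $\hEt$ of the $MGL$-orientation, which lifts the canonical orientation of $H\Zell$; thus $\theta$ sends the bordism fundamental class $[Z']$ to Friedlander's \'etale homology fundamental class used in the definition of $\clH$ in \cite{fried}, and the remaining operations (proper pushforward and division by $d$) commute with $\theta$. Naturality with respect to projective morphisms is immediate from functoriality of proper pushforwards (Lemma \ref{compactification}) and the fact that alterations pull back along such maps with degrees multiplying appropriately. The principal technical obstacle is the well-definedness step: to obtain the degree formula $(q_a)_{\ast}[W]=\deg(q_a)\,[Z'_a]$ already at the level of \'etale bordism (and not merely after passage to \'etale homology), one must combine Theorem \ref{quillenstheorem} with Poincar\'e duality and the classical identity in \'etale cohomology; a similar interplay is needed in the algebraic-equivalence step to reduce to the connected-curve statement.
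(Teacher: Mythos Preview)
Your proposal follows the paper's strategy closely: define $\clMU$ on a prime cycle via a Gabber alteration, prove independence of the alteration by dominating two choices with a third and invoking the degree formula (which, as you correctly note, holds in $\hMU^{\et}_{2i}\otimes_{\hMU_{\ast}}\Zell$ by combining Theorem~\ref{quillenstheorem} with Poincar\'e duality, Theorem~\ref{duality}), and check $\theta\circ\clMU=\clH$ from compatibility of orientations.  These parts match the paper's proof essentially verbatim.

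Where you diverge is the algebraic-equivalence step, and there your justification is too thin.  You write that ``compatibility of $\clMU$ with the pullback along $i_{t_j}$'' is ``an application of Proposition~\ref{pullback} together with the construction of $\clMU$''.  But Proposition~\ref{pullback} only \emph{defines} a Gysin pullback on $\hMU^{BM,\et}$; it says nothing about how that pullback interacts with $\clMU$.  Establishing $\clMU(W_{t_j})=i_{t_j}^{\ast}\clMU(W)$ is exactly the content the paper isolates later as Theorem~\ref{lemma4.2} and Lemma~\ref{lemma4.3}: one must pass to a smooth alteration, use the projection formula, and then invoke the low-degree isomorphism of Theorem~\ref{quillenstheorem} together with Friedlander's identity in \'etale homology to identify the cap product $\omega\cap[Z']$ with the $\clMU$-class of the (possibly singular) fibre.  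The paper deliberately avoids this for Theorem~\ref{maintheorem} and argues more directly: after reducing to $W$ by pushforward compatibility, take a smooth projective alteration $W'\to\oW$; the two fibres become algebraically, hence homologically, equivalent divisors on $W'$, so their $\clH$-classes in $H^2_{\et}(W';\Zell)$ coincide, and since $\theta$ is an isomorphism in that degree (Theorems~\ref{quillenstheorem} and~\ref{duality}) their $\clMU$-classes already coincide.  Your route can be made to work, but it is not shorter, and the step you wave through is precisely the substantive one.

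One further small omission: your sentence on projective naturality does not cover the case $\dim f(Z)<\dim Z$.  The paper's Lemma~\ref{pushforwards} treats it separately, using that $\hMU^{BM,\et}_{2i}(Y)\otimes_{\hMU_{\ast}}\Zell$ vanishes in the relevant negative degree by Theorem~\ref{quillenstheorem}; ``degrees multiplying appropriately'' does not address this.
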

We remind the reader that in the above statement and in the following prove we stick to Convention \ref{choice} to use the fixed isomorphism to identify $\Zell=\Zell(1)$. Before we prove the theorem, let us start with some comments. If $k=\C$ is the field of complex numbers, the cycle map of \cite{totaro}, Theorem 3.1, sends a closed irreducible subscheme $Z\subset X$ of dimension $i$ is sent to the class $[\tilde{Z} \to X]\in MU^{BM}_{2i}X\otimes_{MU_{\ast}}\Z$, where $\tilde{Z} \to Z$ denotes a resolution of singularities. This is also the unique  map induced by the universality of Chow groups among oriented cohomology theories whose formal group law is additive, see Remark 1.2.21 and the proof of Theorem 4.5.1 in \cite{lm}.\\  
In both papers \cite{totaro} and \cite{lm}, maps for integral coefficients are constructed using resolution of singularities for fields of characteristic zero, a technique that is so far not available over a base of positive characteristic. The best known replacement is the work of de Jong \cite{dejong} on alterations and its improvement by Gabber. An alteration of a Noehterian integral scheme $X$ over a field $k$ is a dominant proper morphism $\pi:X' \to X$ from an integral scheme $X'$ to $X$ with $\dim X'=\dim X$. The map $\pi$ is finite and flat over a nonempty open subset of $X$. In \cite{dejong} de Jong proved that for any variety $X$ and any proper closed subset $Z\subset X$, there is a regular alteration $\pi:X' \to X$ such that $\pi^{-1}(Z)$ is the support of a strict normal crossings divisor in some regular projective variety $\bar{X}'$. Moreover, if $k$ is a perfect field, then $\bar{X}'$ and hence also $X'$ are smooth over $k$ and $p$ is generically \'etale. This is weaker than a resolution of singularities of $X$, since alterations allow $k(X')$ to be a finite extension of the function field $k(X)$ whereas a resolution of singularities would require $k(X')=k(X)$.\\ 
In order to construct a well defined cycle map as in the theorem an arbitrary smooth alteration would not suffice, since we would not be able to show that two different alterations of $X$ define the same element in the quotient $\hMU^{\et}_{\ast}(X)\otimes_{\hMU_{\ast}} \Zell$. The problem is that there used to be no control on the degree of the alteration. But recently, Gabber improved de Jong's result further by showing that there is an alteration with some control on the degree of the extension $k(X')/k(X)$. To be more precise, Gabber proved the following result, cf. \cite{gabber}; see also \cite{illusie} for a more detailed account. 
\begin{theorem}\label{gabber}{\rm (Gabber)}
Let $X$ be a separated scheme of finite type over a perfect field $k$, $Z\subset X$ a nowhere dense closed subset, $\ell$ a prime $\neq \car (k)$. Then there exists an alteration $\pi:X' \to X$ of degree prime to $\ell$ with $X'$ smooth and quasiprojective over $k$ and $\pi^{-1}(Z)$ the support of a strict normal crossings divisor. 
\end{theorem}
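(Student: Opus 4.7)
The plan is to follow Gabber's strategy as exposited by Illusie \cite{illusie}, proceeding by induction on $d = \dim X$ with a careful bookkeeping of degrees. First I would reduce to the case where $X$ is integral and projective over $k$, and $Z$ is a strict closed subset of pure codimension one, by standard manipulations: compactifying, replacing $Z$ by its closure together with the boundary, and adding the singular locus to $Z$ so that the assertion away from $Z$ is trivial. The base case $d\le 1$ follows from normalization (in dimension one an integral curve has a finite birational alteration to its normalization, which is smooth over a perfect field and of degree one).

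For the inductive step the key is a fibration argument. I would construct a morphism $f: X\to Y$ with $\dim Y = d-1$, arising from a sufficiently generic linear projection after embedding $X\into\Pro^N$, so that $f$ has generically smooth curve fibers and $Z$ is in good position relative to $f$. Applying the inductive hypothesis to $Y$ (with respect to the discriminant locus and $f(Z)$) produces an $\ell'$-alteration $Y'\to Y$; forming $X\times_Y Y'$ and then normalizing, one obtains a family of curves over a smooth base $Y'$. One then applies a relative version of the semistable reduction theorem for curves (in the spirit of de Jong) to this family, together with further inductive applications to resolve the horizontal components of $\pi^{-1}(Z)\cup\text{(singular locus)}$, so that the final total space $X'$ becomes smooth and $\pi^{-1}(Z)$ a strict normal crossings divisor. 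This is the geometric backbone of de Jong's original argument \cite{dejong}.

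The new and hard ingredient, which is precisely Gabber's contribution, is to arrange the degree of $X'\to X$ to be prime to $\ell$. The obstacle is that both the relative semistable reduction step (which involves passing to a Galois cover of the base to trivialize monodromy and killing torsion in a Jacobian) and the generic projection step a priori introduce ramification whose degree is uncontrolled. To gain control I would follow Gabber in working throughout with $\ell'$-alterations and using two refinements: first, at the curve-fibration level, replace arbitrary Galois covers of $Y'$ by their maximal $\ell'$-quotients (this suffices because the monodromy/torsion obstructions to semistable reduction live in groups whose $\ell$-primary part can be trivialized by a cover of degree prime to $\ell$, using that the $\ell$-torsion of the relative Jacobian is controlled by Tate modules); second, at the projection step, choose the linear projection from a point defined over a suitable $\ell'$-extension so that the generic degree of $f$ is prime to $\ell$. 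One must verify that the composition of such $\ell'$-alterations remains an $\ell'$-alteration, which is clear since degrees multiply.

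The main obstacle is precisely the second paragraph's interaction with the third: controlling the degree in the semistable reduction step for the relative curve $X\times_Y Y'\to Y'$ while simultaneously resolving the horizontal divisor. I expect this to require the most delicate argument, invoking Gabber's refined descent for log schemes and his uniformization lemma for families of curves of genus $\ge 1$, together with a careful treatment of the $\ell$-part of the component groups of the Néron models of the relative Jacobians. Once these $\ell'$-refinements are in place at every inductive step, the degree of the resulting $\pi: X'\to X$ is a product of degrees each prime to $\ell$, giving the required $\gcd(\deg\pi,\ell)=1$; smoothness of $X'$ and the normal crossings property of $\pi^{-1}(Z)$ are preserved throughout by construction.
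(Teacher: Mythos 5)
The paper itself gives no proof of this statement: it is quoted as Gabber's theorem with references to \cite{gabber} and \cite{illusie}, so there is no in-paper argument to compare yours against. Judged on its own terms, your sketch reproduces the de Jong induction (curve fibrations over a base of one lower dimension, semistable reduction in families, induction on the base and on the horizontal divisor) but breaks down at exactly the point you identify as the delicate one. The assertion that one can ``replace arbitrary Galois covers of $Y'$ by their maximal $\ell'$-quotients'' because ``the monodromy/torsion obstructions to semistable reduction live in groups whose $\ell$-primary part can be trivialized by a cover of degree prime to $\ell$'' is not correct: by Grothendieck's criterion one achieves semistable reduction by trivializing the $m$-torsion of the relative Jacobian for some $m\ge 3$ invertible on the base, and even choosing $m$ prime to $\ell$ the resulting cover has degree dividing $|\mathrm{GL}_{2g}(\Z/m)|$, which is in general divisible by $\ell$; its prime-to-$\ell$ quotient need not trivialize anything. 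Likewise a generic linear projection cannot in general be arranged to have degree prime to $\ell$ by choosing the center over an $\ell'$-extension. This is precisely why the theorem resisted a ``degree bookkeeping'' refinement of de Jong's induction.

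Gabber's actual route is structurally different. One first applies de Jong's \emph{Galois} alteration theorem to produce a smooth quasiprojective $\tilde X$ with an action of a finite group $G$ and a $G$-equivariant alteration $\tilde X\to X$ such that $\tilde X/G\to X$ is generically radicial (so of degree a power of $p=\car k$, hence prime to $\ell$). Taking an $\ell$-Sylow subgroup $H\subset G$, the quotient map $\tilde X/H\to X$ then has degree prime to $\ell$; the entire remaining problem is to desingularize the quotient $\tilde X/H$ by a further \emph{modification} (degree one), keeping the preimage of $Z$ a strict normal crossings divisor. Since $\ell\ne p$, the $H$-action is tame, and Gabber's new input is to make it \emph{very tame} by equivariant blowups and then resolve the resulting quotient singularities by logarithmic/toric methods. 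If you want to present a proof sketch, it should be organized around the Sylow-quotient reduction and the tame quotient-singularity resolution, not around inserting $\ell'$-conditions into each step of de Jong's induction.
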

We will now prove Theorem \ref{maintheorem} using Gabber's result. Let $X$ and $k$ be as in Theorem \ref{maintheorem} and let $Z\subset X$ be an irreducible subvariety of $X$ of dimension $i$. Since $X$ is quasiprojective, it has a projective compactification $\bar{X}$. The closure $\bar{Z}$ of $Z$ in $\bar{X}$ is a projective compactification of $Z$. Now let $\bar{Z}_0\subset \bar{Z}$ be the singular locus of $\bar{Z}$. We apply Theorem \ref{gabber} to the pair $\bar{Z}_0 \subset \bar{Z}$. We get an alteration $\pi:Z' \to \bar{Z}$ of degree $d$ prime to $\ell$ with $Z'$ smooth and, since $\pi$ is quasiprojective and proper, $Z'$ is also projective over $k$ 
$$\xymatrix{
  &  Z' \ar[d]_{\pi}\\
Z \ar[r] \ar[d] & \bar{Z} \ar[d]\\
X\ar[r] & \bar{X}.}$$
Hence $Z'$ has a fundamental class in $\hMU^{\et}_{2i}(Z')\otimes_{\hMU_{\ast}} \Zell$ by Theorem \ref{duality}. Since $(d,\ell)=1$, $d$ is invertible in $\Zell$. We send the cycle $Z\subset X$ to the pushforward of the fundamental class of $Z'$ under the projective map $Z' \to \bar{X}$ divided by the degree:  
$$\clMU(Z):=\frac{1}{d} [Z'\stackrel{\pi}{\to} \bar{Z} \subset \bar{X}] \in \hMU^{BM,\et}_{2i}(X)\otimes_{MU_{\ast}} \Zell.$$ 
Here we use the notation of \cite{lm} and write $[Y\to X]$ for the class $f_{\ast}([Y])$ in $\hMU^{BM,\et}_{2n}(X)$, for a projective morphism $f:Y\to X$ of schemes over $k$ and $Y$ a smooth and projective variety over $k$ of dimension $n$ with fundamental class $[Y]$.\\ 
We have to show that this definition is independent of the choice of $Z'$. 
We have already shown that Borel-Moore bordism is independent of the choice of compactification. It remains to show the independence of the choice of alteration. Let $\pi_1:Z'_1 \to \ovZ$ and $\pi_2:Z'_2 \to \ovZ$ be two smooth $\ell$-primary alterations of $Z$ of degree $d_1$ and $d_2$ respectively. The point is that the difference between the two classes corresponding to $\pi_1$ and $\pi_2$ lies in the subgroup $\hMU_{\ast>0}^{\et}(X)$, hence it vanishes in the quotient $\hMU^{BM,\et}_{2i}(X)\otimes_{\hMU_{\ast}} \Zell$. Let us check this. There is a third $\ell$-primary alteration $Z'_3$ that dominates both $Z'_1$ and $Z'_2$. For this, it suffices to construct a smooth $\ell$-primary alteration $Z'_3$ of the fibre product $Z'_1\times_{\ovZ} Z'_2$:
\begin{equation}\label{alterationsquare}
\xymatrix{
Z'_3 \ar[r] & Z'_1\times_{\ovZ} Z'_2 \ar[r] \ar[d] & Z'_1 \ar[d] \\
 & Z'_2 \ar[r] & \ovZ.}\end{equation}
Since $\pi_1$ and $\pi_2$ are generically \'etale, their properties and degrees are preserved under their mutual base change, i.e. the map $Z_1\times_Z Z_2 \to Z_1$ is proper dominant and generically \'etale of degree $d_2$, the map $Z_1\times_{\ovZ} Z_2 \to Z_2$ is also proper, dominant and generically \'etale of degree $d_1$. Hence $\pi_3:Z'_3\to \ovZ$ is a smooth alteration of $\ovZ$. Moreover, since all alterations were chosen of degree prime to $\ell$, we conclude that the degree $d_3$ of $\pi_3$ is prime to $\ell$ and $\pi_3$ is a smooth $\ell$-primary alteration refining $\pi_1$ and $\pi_2$.\\  
Let $e_1$ and $e_2$ be the degrees of the maps $Z'_3 \to Z'_1$ and $Z'_3 \to Z'_2$. They satisfy the equality $d_3=d_1e_1=d_2e_2$.  Now the class of a map $f:Y\to X$ of smooth varieties of the same dimension $n$ is equal to the class of the identity $X \to X$ multiplied by the degree of $f$ in $H_{2n}^{\et}(X;\Zell)$. The isomorphism $\hMU^{\et}_{2n}(X)\otimes_{\hMU_{\ast}} \Zell \cong H_{2n}^{\et}(X;\Zell)$ of Theorems \ref{quillenstheorem} and \ref{duality} shows that this relation also holds in the quotient of \'etale bordism. Hence we get the two equalities
$$[Z'_3 \to Z'_1]=e_1[Z'_1 \to Z'_1] ~ \mathrm{in}~\hMU^{\et}_{2i}(Z'_1)\otimes_{\hMU^{\ast}} \Zell$$
and 
$$[Z'_3 \to Z'_2]=e_2[Z'_2 \to Z'_2] ~ \mathrm{in}~\hMU^{\et}_{2i}(Z'_2)\otimes_{\hMU^{\ast}} \Zell$$
and as a consequence also
$$\frac{1}{d_1}[Z'_1 \to \oX]=\frac{1}{d_3}[Z'_3 \to \oX]=\frac{1}{d}_2[Z'_2 \to \oX]$$
in $\hMU^{BM,\et}_{2i}(X)\otimes_{\hMU_{\ast}} \Zell$. Hence $\pi_1$ and $\pi_2$ define the same element, called the class of $Z$ in $\hMU^{BM,\et}_{2i}(X)\otimes_{MU_{\ast}} \Zell$. Finally, we extend this map to arbitrary cycles by linearity.\\ 
We have to check that this cycle map $\clMU$ induces the cycle map 
$$\clH: Z_iX \to H_{2i}^{BM,\et}(X;\Zell)$$ 
to \'etale Borel-Moore homology, see e.g. \cite{fried} Proposition 17.4 for a definition. The map $\clH$ sends the class of an irreducible subvariety $Z\subset X$ of dimension $i$ to the image of the fundamental class of $Z$ under the pushforward $H_{2i}^{BM,\et}(Z;\Zell) \to H_{2i}^{BM,\et}(X;\Zell)$. Since the morphism $\theta$ from bordism to homology is a transformation of oriented homology theories, it sends the fundamental classes to fundamental classes and is compatible with pushforwards. Hence to prove the statement, it suffices to observe as before that, if $f:Z' \to Z$ is a finite morphism of degree $d$ of schemes of the same dimension $i$ over $k$, the pushforward $f_{\ast}$ sends the class $[Z']$ to $d[Z]$ in $H_{2i}^{BM,\et}(Z;\Zell)$. This implies $\clH=\theta \circ \clMU$ on the level of cycles.\\  
%
\begin{lemma}\label{pushforwards}
The map $\clMU$ commutes with pushforwards along projective morphisms.
\end{lemma}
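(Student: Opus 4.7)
By $\Z$-linearity it suffices to verify $f_{\ast}(\clMU([Z]))=\clMU(f_{\ast}[Z])$ for a single irreducible subvariety $Z\subset X$ of dimension $i$ under the projective morphism $f\colon X\to Y$. I would extend $f$ to $\bar f\colon\oX\to\oY$ between projective compactifications and fix a smooth $\ell$-primary alteration $\pi\colon Z'\to\ovZ$ of degree $d$, so that by the functoriality of Borel--Moore pushforward from Lemma \ref{compactification},
$$f_{\ast}(\clMU([Z]))=\tfrac{1}{d}\bigl[Z'\xrightarrow{\bar f\pi}\oY\bigr]\in\hMU^{BM,\et}_{2i}(Y)\otimes_{\hMU_{\ast}}\Zell.$$

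The strategy is to pull the comparison back into the projective scheme $V:=\overline{f(Z)}$ and use the following key fact: for any projective scheme $V/k$ of dimension at most $i$, the edge map
$$\hMU^{\et}_{2i}(V)\otimes_{\hMU_{\ast}}\Zell\;\longrightarrow\;H^{\et}_{2i}(V;\Zell)$$
is an isomorphism, with target $\Zell$ (canonically generated by the fundamental class) when $V$ is irreducible of dimension $i$, and target zero when $\dim V<i$. I would establish this via the Atiyah--Hirzebruch spectral sequence $E^{2}_{p,q}=H^{\et}_{p}(V;\Zell)\otimes\hMU_{q}\Rightarrow\hMU^{\et}_{p+q}(V)$: since $H^{\et}_{p}(V;\Zell)=0$ for $p>2\dim V$, every nonzero $E^{\infty}_{p,q}$ contributing to total degree $2i$ with $p<2i$ has $q>0$, and by compatibility of the filtration with the $\hMU_{\ast}$-module structure, these contributions are $\hMU_{>0}$-multiples of lower-degree classes and therefore die in the quotient by $\hMU^{<0}$; only the top line $E^{\infty}_{2i,0}=H^{\et}_{2i}(V;\Zell)$ survives.

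With this input in hand the two cases become immediate. When $\dim f(Z)=i$, set $e:=[k(Z):k(f(Z))]$, so $f_{\ast}[Z]=e\cdot[f(Z)]$, and choose a smooth $\ell$-primary alteration $\pi'\colon W'\to V$ of degree $d'$; the classes $[Z'\to V]$ and $[W'\to V]$ in $\hMU^{\et}_{2i}(V)\otimes_{\hMU_{\ast}}\Zell\cong\Zell$ are the degrees $de$ and $d'$ respectively times the canonical generator, whence $d'\,[Z'\to V]=de\,[W'\to V]$; pushing to $\oY$ and using that $d$, $d'$ are units in $\Zell$ gives $\tfrac{1}{d}[Z'\to\oY]=\tfrac{e}{d'}[W'\to\oY]=\clMU(f_{\ast}[Z])$. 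When $\dim f(Z)<i$ we have $f_{\ast}[Z]=0$, and the key input applied to $V$ of dimension strictly less than $i$ shows that $[Z'\to V]$ already vanishes in $\hMU^{\et}_{2i}(V)\otimes_{\hMU_{\ast}}\Zell$, so that $\tfrac{1}{d}[Z'\to\oY]=0$ as well.

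The bookkeeping in both cases is easy once the key technical input is available; the main obstacle is establishing that input. Making the spectral-sequence argument rigorous requires the $\hMU_{\ast}$-module structure on $\hMU^{\et}_{\ast}(V)$ to be compatible with the Atiyah--Hirzebruch filtration, so that higher filtration layers really are $\hMU_{>0}$-generated and hence killed in the quotient by $\hMU^{<0}$. This is the \'etale Borel--Moore analogue of the vanishing underlying Totaro's argument in \cite{totaro}, and it constitutes the genuinely new step beyond the degree accounting.
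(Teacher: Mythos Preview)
Your overall bookkeeping with degrees is correct, and the reduction to a statement about $\hMU^{\et}_{2i}(V)\otimes_{\hMU_{\ast}}\Zell$ for $V=\overline{f(Z)}$ is a reasonable plan. But the ``key fact'' you isolate as the main obstacle --- that the edge map
\[
\hMU^{\et}_{2i}(V)\otimes_{\hMU_{\ast}}\Zell \longrightarrow H^{\et}_{2i}(V;\Zell)
\]
is an isomorphism for a possibly \emph{singular} projective $V$ of dimension $\le i$ --- is precisely what the paper avoids. Your spectral-sequence sketch for it has a genuine hole: compatibility of the $\hMU_{\ast}$-module structure with the Atiyah--Hirzebruch filtration only says that multiplication by $\hMU_{>0}$ \emph{preserves} the filtration, hence acts on associated graded pieces; it does \emph{not} say that every element of $F_{2i-1}\hMU^{\et}_{2i}(V)$ is an honest $\hMU_{>0}$-multiple. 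Bridging that gap requires either Spanier--Whitehead duality (to reduce to the cohomological Theorem~\ref{quillenstheorem}) or finer structural input about $MU_{\ast}$-modules; neither is in the paper and neither is what you wrote.

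The paper sidesteps this entirely. Instead of comparing classes inside the singular $V$, it first chooses a smooth $\ell$-primary alteration $\pi_2\colon Z'_2\to \overline{f(Z)}$ of degree $d_2$, and then --- using the fibre-product construction already employed for well-definedness --- builds the alteration $\pi_1\colon Z'_1\to\ovZ$ so that it \emph{factors through} $Z'_2$, giving a commutative square
\[
\xymatrix{
Z'_1 \ar[d] \ar[r] & \ovZ \ar[d]^{f} \\
Z'_2 \ar[r] & \overline{f(Z)}.
}
\]
Now the comparison $[Z'_1\to Z'_2]=e\,[Z'_2\to Z'_2]$ (with $e=\deg(Z'_1\to Z'_2)$) takes place in $\hMU^{\et}_{2i}(Z'_2)\otimes_{\hMU_{\ast}}\Zell$, and $Z'_2$ is \emph{smooth projective of dimension $i$}. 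Poincar\'e duality (Theorem~\ref{duality}) identifies this with $\hMU_{\et}^{0}(Z'_2)\otimes_{\hMU^{\ast}}\Zell$, and Theorem~\ref{quillenstheorem} applied to the finite simplicial set $\hEt Z'_2$ gives the isomorphism with $H^{0}_{\et}(Z'_2;\Zell)$ directly --- no new technical input needed. The relation $d_1 d = d_2 e$ then finishes the equal-dimension case exactly as in your computation. The case $\dim f(Z)<i$ is handled the same way: $[Z'_1\to Z'_2]$ lands in $\hMU_{\et}^{2\dim Z'_2 - 2i}(Z'_2)\otimes_{\hMU^{\ast}}\Zell$ with negative cohomological degree, which vanishes by Theorem~\ref{quillenstheorem}.

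In short: your route is not wrong, but it manufactures an obstacle that the paper's dominating-alteration trick dissolves, reducing everything to the smooth projective case where the needed isomorphism is already proved.
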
 
\begin{proof}
We argue basically as in \cite{totaro}. The only point to check is that inserting degrees does not change the argument. Let $f:X\to Y$ be a projective morphism. For a closed subvariety $Z\subset X$ of dimension $i$, $f(Z)$ is a closed subvariety of $Y$ and the pushforward on $Z_iX$ is defined by
\[
f_{\ast}(Z) = \left \{ \begin{array}{l@{\quad \quad}l}
                                          \mathrm{deg}\,(f: Z \to f(Z))f(Z) & \mathrm{if}~\dim f(Z)=\dim Z\\
                                          0 & \mathrm{if} ~ \dim f(Z)<\dim Z. 
                                          \end{array} \right. 
\]  
Let $d$ denote the degree of the map $f:Z\to f(Z)$. If $\dim f(Z)=\dim Z$, let $\pi_2:Z'_2\to f(Z)$ be an $\ell$-primary smooth alteration of degree $d_2$ of $f(\ovZ)$ and let $\pi_1:Z'_1 \to \ovZ$ an $\ell$-primary alteration over $\pi_2$ of degree $d_1$ constructed as in (\ref{alterationsquare}) such that we get a commutative diagram
$$\xymatrix{
Z'_1 \ar[d] \ar[r] & \ovZ \ar[d] \ar[r] & \oX\ar[d]_{\bar{f}} \\
Z'_2 \ar[r] & f(\ovZ) \ar[r] & \oY.}$$ 
Let $e$ be the degree of $Z'_1 \to Z'_2$. The diagram shows $d_1d=d_2e$. From the isomorphism $\hMU_{2i}^{BM,\et}(Z'_2)\otimes_{\hMU_{\ast}} \Zell \cong H^{BM,\et}_{2i}(Z'_2;\Zell)$ and the relations in $H^{BM,\et}_{2i}(Z'_2;\Zell)$ we get 
$$[Z'_1 \to Z'_2]=e[Z'_2 \to Z'_2]~\mathrm{in}~\hMU^{BM,\et}_{2i}(Z'_2)\otimes_{\hMU_{\ast}} \Zell.$$
The image under the pushforward 
$$f_{\ast}: \hMU_{2i}^{BM,\et}(X)\otimes_{\hMU_{\ast}} \Zell \to \hMU_{2i}^{BM,\et}(Y)\otimes_{\hMU_{\ast}} \Zell$$ 
of the class $\clMU(Z)$ is by definition the class of the map $Z'_1 \to \oY$. Now since the above diagram commutes, this map factors through $Z'_2$. Hence the above equalities imply the following identifications in $\hMU_{2i}^{BM,\et}(Y)\otimes_{\hMU_{\ast}} \Zell$ 
$$f_{\ast}(\clMU(Z))=\frac{1}{d_1} [Z'_1\to \oX \stackrel{\bar{f}}{\to} \oY] = e\frac{1}{d_1} [Z'_2 \to f(\ovZ) \to \oY]=d\, \clMU(f(Z)).$$
This proves the lemma for the case $\dim f(Z)=\dim Z$.\\
If $\dim f(Z) < \dim Z$, then the argument is similar using the fact that the class of a projective map $f:X\to Y$ of smooth schemes with $\dim X>\dim Y$ is $0$ in $\hMU^{BM,\et}_{2i}(Y)\otimes_{\hMU_{\ast}}\Zell$ by Quillen's Theorem \ref{quillenstheorem}.
\end{proof}
\begin{lemma}
The map $\clMU$ is well defined modulo algebraic equivalence of cycles. 
\end{lemma}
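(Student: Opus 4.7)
The plan is to follow the classical strategy for invariance under algebraic equivalence: reduce to a one-parameter family over a smooth projective curve, and show that specializations at two different closed points give the same class in the quotient of \'etale Borel--Moore bordism.

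By bilinearity and Lemma~\ref{pushforwards}, I would first reduce to the case of two prime cycles $Z_0,Z_1\subset X$ of dimension $i$ joined by a family: a smooth connected projective curve $C$, closed points $c_0,c_1\in C$, and an irreducible $(i+1)$-dimensional subvariety $W\subset X\times C$ flat over $C$ whose scheme-theoretic fibers $W_{c_j}$ equal $Z_j$. Let $\oW\subset \oX\times C$ be its closure and $p:\oW\to C$ the projection. Next, I would apply Gabber's Theorem~\ref{gabber} to $\oW$ together with the union of its singular locus and $p^{-1}(\{c_0,c_1\})$, obtaining a smooth projective alteration $\pi:W'\to \oW$ of degree $d$ prime to $\ell$ such that $q^{-1}(c_j)$ is a strict normal crossings divisor on $W'$ for $q:=p\circ\pi$. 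Writing $i_j:\oX=\oX\times\{c_j\}\hookrightarrow \oX\times C$, I would then establish the base-change identity
$$ \clMU(Z_j)\;=\;\tfrac{1}{d}\,[q^{-1}(c_j)\to \oX]\;=\;\tfrac{1}{d}\,i_j^{\ast}[W'\to \oX\times C] $$
in $\hMU^{BM,\et}_{2i}(X)\otimes_{\hMU_{\ast}}\Zell$ using the refined Gysin pullback of Proposition~\ref{pullback} for the principal Cartier divisor cut out by $c_j$. This reduces the lemma to proving $i_0^{\ast}=i_1^{\ast}$ on $\hMU^{BM,\et}_{2i+2}(\oX\times C)\otimes_{\hMU_{\ast}}\Zell$.

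By Proposition~\ref{pullback}, $i_j^{\ast}$ is cap product with the orientation class $\omega_j\in \hMU^{2}_{\et}(\oX\times C,\oX\times C - \oX\times\{c_j\})$, which equals $\mathrm{pr}_C^{\ast}\omega_{C,c_j}$ for $\omega_{C,c_j}\in\hMU^{2}_{\et}(C,C-\{c_j\})$. Using the projection formula $i_{j!}(i_j^{\ast}\alpha)=\alpha\cap\eta_j$, where $\eta_j$ denotes the image of $\omega_j$ in absolute cobordism, together with the identity $\mathrm{pr}_{\oX}\circ i_j=\mathrm{id}$, I would reduce the claim to showing that the images of $\omega_{C,c_0}$ and $\omega_{C,c_1}$ in $\hMU^{2}_{\et}(C)\otimes_{\hMU^{\ast}}\Zell$ coincide. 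By Poincar\'e duality (Theorem~\ref{duality}) applied to the smooth projective curve $C$, these images correspond to the fundamental classes $[c_0],[c_1]\in \hMU^{\et}_{0}(C)\otimes_{\hMU_{\ast}}\Zell$. By Theorem~\ref{quillenstheorem} and connectedness of $C$, this quotient is isomorphic via the edge map to $H^{\et}_{0}(C;\Zell)\cong\Zell$, in which both $[c_j]$ are the generator. Hence $i_0^{\ast}=i_1^{\ast}$ in the quotient, proving the lemma.

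The hard part will be the base-change identity $i_j^{\ast}[W'\to \oX\times C]=[q^{-1}(c_j)\to\oX]$ when $q^{-1}(c_j)$ is a non-reduced strict normal crossings divisor. I would handle this componentwise: on each smooth irreducible component $D_k\subset q^{-1}(c_j)$ with multiplicity $n_k$, the Thom isomorphism underlying the orientation-class cap product of Proposition~\ref{pullback} should contribute $n_k[D_k\to\oX]$, and summing recovers the scheme-theoretic fiber class. Verifying this carefully, i.e.\ that the refined Gysin formula sees the SNC multiplicities correctly in \'etale bordism, is the only nontrivial technical point beyond the formal argument sketched above.
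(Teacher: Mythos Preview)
Your strategy is correct in outline but takes a more circuitous route than the paper's. After the same initial reduction (via Lemma~\ref{pushforwards}) to showing that two fibre cycles of a family $W\to C$ have equal $\clMU$-class, the paper does \emph{not} work on $\oX\times C$ and does not compute any Gysin pullback. Instead it passes once more, via Lemma~\ref{pushforwards}, to a smooth $\ell$-primary alteration $\pi:W'\to\oW$ and compares the two pullback fibre divisors directly on $W'$. The point is that $W'$ is smooth projective of dimension $i+1$, so Theorems~\ref{quillenstheorem} and~\ref{duality} give $\hMU^{\et}_{2i}(W')\otimes_{\hMU_{\ast}}\Zell\cong H^{2}_{\et}(W';\Zell)$; under this isomorphism $\clMU$ is just $\clH$, and the two fibre divisors on $W'$ are linearly (hence homologically) equivalent. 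Pushing forward to $W$ and then to $X$ finishes. No base-change formula, no SNC multiplicity bookkeeping, and no smoothness hypothesis on $\oX$ is needed.

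Your endgame---reducing to $[c_0]=[c_1]$ in $\hMU^{2}_{\et}(C)\otimes_{\hMU^{\ast}}\Zell$ via the low-degree isomorphism on the curve---is really the same idea, applied to $C$ instead of $W'$. Applying it on $W'$ is what eliminates the intermediate step. The base-change identity you flag as the hard part, $i_j^{\ast}[W'\to\oX\times C]=[q^{-1}(c_j)\to\oX]$, is essentially the content of Lemma~\ref{lemma4.3}, which the paper proves \emph{later} and for a different purpose (compatibility of $\clMU$ with regular-embedding pullbacks). So your ordering would front-load that lemma. Two further points worth noting: Proposition~\ref{pullback} is stated for closed embeddings of smooth schemes, so invoking it for $\oX\times\{c_j\}\hookrightarrow\oX\times C$ when $X$ is merely quasiprojective needs a word (harmless here since the embedding is pulled back from $\{c_j\}\hookrightarrow C$); and your first equality $\clMU(Z_j)=\tfrac{1}{d}[q^{-1}(c_j)\to\oX]$ also needs justification when some components of $q^{-1}(c_j)$ fail to dominate $\overline{Z_j}$.
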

\begin{proof}
The proof is exactly the same as in \cite{totaro}. We include it for completeness. Over an algebraically closed field, algebraic equivalence can be defined by connecting points via smooth projective curves. So let $C$ be a smooth projective curve $C$ over $k$ and let $j:W \subset X\times C$ be a subvariety of dimension $i+1$ such that the projection $f:W \to C$ is a dominant morphism. Let $p:X\times C \to X$ and $p_1:W \to X$ be the projections on the first factor. We have to show that for any two points $a,b \in C$
$$\clMU(W(a))= \clMU(W(b)) \in \hMU^{BM,\et}_{2i}(X)\otimes_{\hMU_{\ast}}\Zell$$ 
where, for $t \in C$, $W(t)$ denotes the image under $p$ of $f^{\ast}(t)$, the cycle associated to $f^{-1}(t) \subset X\times \{P\}$, as a closed subscheme of $X$. Since $\clMU$ commutes with pushforwards, it suffices to show 
\begin{equation}\label{tocheck}
\clMU(f^{\ast}(a))= \clMU(f^{\ast}(b)) \in \hMU^{BM,\et}_{2i}(W)\otimes_{\hMU_{\ast}}\Zell.
\end{equation}  
Let $\pi:W' \to \oW$ be an $\ell$-primary alteration of the projective compactification $\oW$ of $W$. It induces alterations $f^{-1}(a)'$ and $f^{-1}(b)'$ of degrees $d_a$ and $d_b$ respectively, such that $f^{-1}(0)'$ and $f^{-1}(b)'$ determine algebraically equivalent cycles of dimension $i$ in $W'$. Since the pushforwards under $\pi_{\ast}$ of the cycles $f^{\ast}(a)'$ and $f^{\ast}(b)'$ are just the cycles $f^{\ast}(a)$ and $f^{\ast}(b)$, it suffices to check equality (\ref{tocheck}) for $f^{\ast}(a)'$ and $f^{\ast}(b)'$. But  since algebraically equivalent cycles are also homologically equivalent, the images of $f^{-1}(a)'$ and $f^{-1}(b)'$ are equal in $H^2_{\et}(W';\Zell)$ and thus also in $\hMU^{BM,\et}_{2i}(W')\otimes_{\hMU_{\ast}}\Zell$ by Theorems \ref{quillenstheorem} and \ref{duality} for the smooth projective variety $W'$. This prove the lemma and finishes the proof of Theorem \ref{maintheorem}.
\end{proof}
%
%
%
If $X$ is a smooth projective variety of dimension $n$, then cycles modulo rational and algebraic equivalence, graded by $Z^{\ast}X:=Z_{n-\ast}X$, form a ring. The same holds for the quotient of \'etale cobordism $\hMU_{\et}^{\ast}(X)\otimes_{\hMU^{\ast}} \Zell \cong \hMU^{\et}_{2n-\ast}(X)\otimes_{\hMU_{\ast}} \Zell$. The cycle map respects these products.
\begin{theorem}\label{lemma4.2}
The cycle map $\clMU$ is compatible with pullbacks along regular embeddings of smooth quasiprojective schemes, i.e. for any codimension $d$ regular embedding $X\to Y$ of smooth projective schemes, the following diagram of pullback maps commutes:
$$\xymatrix{
CH_iY \ar[d] \ar[r] & CH_{i-d}X \ar[d]\\
\hMU^{\et}_{2i}(Y)\otimes_{\hMU_{\ast}} \Zell \ar[r] & \hMU^{\et}_{2(i-d)}(X)\otimes_{\hMU_{\ast}} \Zell.}$$
\end{theorem}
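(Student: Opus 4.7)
The plan is to follow Totaro's proof of compatibility with pullback (\cite{totaro}, Lemma 4.2), with two modifications: resolutions of singularities are replaced by Gabber's $\ell$-primary smooth alterations (Theorem \ref{gabber}), and the topological arguments are transported into the $\ell$-adic \'etale stable homotopy category $\hSHh$ via the realization functor $\hEt$ and the oriented cohomology theory $\hMU_{\et}^{\ast}$ (Theorem \ref{orientation}). By linearity it suffices to treat an irreducible subvariety $Z\subset Y$ of dimension $i$. Since $\clMU$ vanishes on cycles algebraically (hence rationally) equivalent to zero by Theorem \ref{maintheorem}, we are free to replace $Z$ by any representative of its class in $CH_i Y$.

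The main device is Fulton's deformation to the normal cone: the scheme $M := \mathrm{Bl}_{X\times \{0\}}(Y\times \A^1)\setminus \mathrm{Bl}_X(Y)$ is flat over $\A^1$, with fibers over $t\neq 0$ equal to $Y$ and special fiber equal to the normal bundle $N := N_{X/Y}$. There is a natural regular embedding $\tilde\iota:X\times \A^1\hookrightarrow M$ whose restriction over $t\neq 0$ is $i$ and whose restriction over $t=0$ is the zero section $s:X\hookrightarrow N$. The strategy is to show that both the Chow pullback $i^{\ast}$ and the Borel-Moore bordism pullback along $i$ factor as specialization from $Y$ to $N$ composed with pullback along $s$; this reduces commutativity of the diagram for $i$ to the analogous statement for $s$. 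On the Chow side, this factorization is the defining feature of Fulton's refined Gysin morphism. On the \'etale bordism side, it follows from Proposition \ref{pullback} applied to $\tilde\iota$ together with the tubular neighborhood excision (\ref{tubularexcision}) for the pair $(M,N)$, which forces the orientation class $\omega_{X/Y}\in \hMU_{\et}^{2d}(Y,Y-X)$ to restrict, under specialization, to the orientation class $\omega_{X/N}$ of the zero section of $N$.

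For the zero section $s:X\hookrightarrow N$ of the vector bundle $N$, both Chow and \'etale bordism pullbacks are inverses of the flat pullback $\pi^{\ast}$ along the projection $\pi:N\to X$. The map $\pi^{\ast}$ is an isomorphism on \'etale Borel-Moore bordism because $\hEt \pi$ is a weak equivalence in the $\Zl$-model structure ($\A^1$-invariance of \'etale cohomology in characteristic different from $\ell$). Since an $\ell$-primary smooth alteration of a cycle in $X$ pulls back under $\pi$ to an $\ell$-primary smooth alteration of its flat pullback to $N$, the cycle map tautologically commutes with $\pi^{\ast}$; inverting $\pi^{\ast}$ yields commutativity for $s$, hence by the reduction above also for $i$.

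The principal obstacle is to construct the specialization map on \'etale Borel-Moore bordism associated to $M\to \A^1$ and to verify that $\clMU$ intertwines it with Fulton's cycle-theoretic specialization $[Z]\mapsto [C_{Z\cap X}(Z)]$. Concretely, given an $\ell$-primary smooth alteration $Z'\to \bar Z$, one must extend it to an $\ell$-primary smooth alteration of the closure of $Z\times \A^1$ in $\bar M$ and identify the bordism class of its special fiber with the image of $[Z'\to Y]$ under the bordism specialization. The ingredients available for this are Gabber's theorem applied uniformly in families, the excision formula (\ref{tubularexcision}), the $\A^1$-invariance of $\hMU_{\et}^{\ast}$, and compatibility of the \'etale realization with the deformation-to-the-normal-cone diagram. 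Once this specialization compatibility is in place, the remainder of the argument is formally parallel to the complex case in \cite{totaro}.
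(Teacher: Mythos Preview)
Your overall architecture is the same as the paper's: reduce via deformation to the normal cone to the zero-section embedding of a vector bundle, and handle the latter by observing that an $\ell$-primary alteration of $Z\subset X$ pulls back to one of $E|_Z\subset E$, so $\clMU$ commutes with the inverse isomorphisms. That part is fine.

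The gap is exactly where you flag it as ``the principal obstacle'': the specialization step. Your proposed mechanism---extend a given alteration $Z'\to \bar Z$ to an $\ell$-primary smooth alteration of the closure of $Z\times\A^1$ inside $\bar M$ and then read off the special fiber---is not what the paper does, and it is not clear it can be made to work. Gabber's theorem gives no control ``uniformly in families'': applying it to the total space $M_{X\cap Z}Z$ yields an alteration whose restriction over $0$ need not be smooth, need not have degree prime to $\ell$, and need not compute $\clMU(C_{X\cap Z}Z)$. You have correctly located the difficulty but not resolved it, and the ingredients you list do not add up to a proof.

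The paper avoids this entirely. Instead of alterations in families, it isolates a fiberwise statement (Lemma \ref{lemma4.3}): for a flat map $f:W\to T$ from a smooth quasiprojective $W$ to a smooth curve, and a subvariety $Z\subset W$ with $g=f|_Z$, the bordism pullback of $\clMU(Z)$ to $f^{-1}(t)$ equals $\clMU(g^{-1}(t))$. The proof takes a single alteration $\pi:Z'\to Z$ (not a family), uses naturality of Thom classes ($\omega_{f^{-1}(t)/W}=f^\ast\omega_{t/T}$) and the projection formula to reduce to the case where $Z$ itself is smooth projective. At that point $g^{-1}(t)$ has dimension $i-1$, and the identity to be checked lives in $\hMU^{BM,\et}_{2i-2}(g^{-1}(t))\otimes_{\hMU_\ast}\Zell$, which by Theorems \ref{quillenstheorem} and \ref{duality} is isomorphic to $H^{BM,\et}_{2i-2}(g^{-1}(t);\Zell)$; there the formula is Friedlander's. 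Applying this lemma to both divisorial fibers $Y\hookrightarrow M_XY$ and $N_{X/Y}\hookrightarrow M_XY$, with $Z$ replaced by $M_{X\cap V}V$, gives the specialization compatibility you need without ever altering the total space.
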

\begin{cor}\label{products}
Let $X$ be a smooth projective variety over $k$. Then the map $\clMU$ commutes with products. 
\end{cor}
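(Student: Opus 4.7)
The plan is to reduce the compatibility with products to the classical formula $Z_1\cdot Z_2=\Delta^{\ast}(Z_1\times Z_2)$, where $\Delta:X\hookrightarrow X\times X$ is the diagonal, and then invoke Theorem \ref{lemma4.2}, which handles the pullback $\Delta^{\ast}$ since $X$ is smooth (so $\Delta$ is a regular embedding of codimension $n=\dim X$). More precisely, I would show first that $\clMU$ is compatible with the exterior (cross) product, and second that the ring structure on both sides arises from this exterior product via pullback along the diagonal. Combining the two steps then yields the assertion.

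To handle the exterior product, let $Z_i\subset X$ be an irreducible cycle and choose smooth $\ell$-primary alterations $\pi_i:Z_i'\to\overline{Z}_i$ of degree $d_i$ (prime to $\ell$) as in the construction of $\clMU$. Then $Z_1'\times Z_2'$ is smooth and projective, and the product map $\pi_1\times\pi_2:Z_1'\times Z_2'\to \overline{Z}_1\times\overline{Z}_2$ is a smooth alteration of degree $d_1d_2$, still prime to $\ell$, so it is an admissible representative for the class of the cycle $Z_1\times Z_2$ on $X\times X$. I would next verify that the fundamental bordism classes multiply, i.e.
\[
[Z_1'\times Z_2']=[Z_1']\times[Z_2']\quad\text{in }\hMU^{\et}_{\ast}(Z_1'\times Z_2')\otimes_{\hMU_{\ast}}\Zell.
\]
This follows from the construction of the Poincar\'e duality isomorphism $D^X$ in Theorem \ref{duality}, because the orientation map $\omega$ used there is pulled back from the $MU$-orientation, which is multiplicative, and because the tangent bundle of a product splits as the external sum of the tangent bundles of the factors, so the associated Thom classes are compatible. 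Dividing by the degrees therefore gives $\clMU(Z_1\times Z_2)=\clMU(Z_1)\times\clMU(Z_2)$.

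With this in hand, Theorem \ref{lemma4.2} applied to the regular embedding $\Delta$ yields
\[
\clMU(Z_1\cdot Z_2)=\clMU(\Delta^{\ast}(Z_1\times Z_2))=\Delta^{\ast}\clMU(Z_1\times Z_2)=\Delta^{\ast}\!\bigl(\clMU(Z_1)\times\clMU(Z_2)\bigr).
\]
Since on $\hMU_{\et}^{2\ast}(X)\otimes_{\hMU^{\ast}}\Zell$ the cup product (transported via Poincar\'e duality from the cap/intersection pairing on bordism) is by definition $\alpha\cdot\beta=\Delta^{\ast}(\alpha\times\beta)$, the right-hand side equals $\clMU(Z_1)\cdot\clMU(Z_2)$. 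Extending by linearity finishes the proof.

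The main obstacle, and essentially the only nontrivial point, is the exterior product compatibility of fundamental classes stated above. One must check that the construction of $[X]$ via the duality isomorphism $D^X$ is genuinely multiplicative in $X$; this amounts to the compatibility of the orientation class $\omega$ of the tangent bundle under products, which in turn reduces to the multiplicativity of the universal $MU$-orientation combined with the fact that $\hEt$ preserves products up to the weak equivalences guaranteed by the projective bundle formula argument in the proof of Theorem \ref{orientation}. Everything else is formal from Theorem \ref{lemma4.2} and the standard description of the intersection product via $\Delta^{\ast}$.
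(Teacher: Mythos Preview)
Your proposal is correct and follows essentially the same route as the paper: both reduce the intersection product to the exterior product followed by pullback along the diagonal $\Delta$, then invoke Theorem \ref{lemma4.2} for the regular embedding $\Delta$. The only difference is that the paper treats the exterior-product compatibility as essentially immediate from the canonical equivalence $\hEt(X\times X)\cong \hEt X\times \hEt X$ in $\hHh$, whereas you spell out in more detail why fundamental classes multiply via the multiplicativity of the $MU$-orientation.
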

\begin{proof}
Recall the construction of the intersection product modulo rational equivalence by Fulton and MacPherson in \cite{fulton}. Let $\alpha$ and $\beta$ be two cycles on $X$. There is an external product cycle $\alpha \times \beta$ on $X \times X$ and the product  $\alpha \beta \in CH_{\ast}(X)$ is defined as the pullback of $\alpha \times \beta$ along the diagonal $CH_{\ast}(X\times X) \to CH_{\ast}(X)$. Similarly, there is an external product map $MU_{\ast}X\otimes_{MU_{\ast}} MU_{\ast}X \to MU_{\ast}(X\times X)$. The product in \'etale cobordism is defined by composing this map with the pullback along the diagonal. Since the diagonal is a regular embedding, the statement follows from the theorem above using the canonical isomorphism $\hEt(X\times X)\cong \hEt X \times \hEt X$ in $\hHh$.
\end{proof}

The rest of this section will be occupied with the proof of Theorem \ref{lemma4.2}. The argument is again the same as in \cite{totaro}. But since Totaro proves the statement in a slightly more general context using a Baum-Fulton-MacPherson pullback for possibly singular complex schemes, we include the proof in order to show that the main argument applies in our setting as well, where not all topological constructions are available.\\
The main example of a regular embedding is the inclusion of the zero-section of a vector bundle into the total space of the vector bundle $X \hookrightarrow E$. The pullback $CH_iE \to CH_iX$ is defined to be the inverse of the natural isomorphism $CH_iX \to CH_iE$ sending a subvariety $Z\subset X$ to $E|_Z \subset E$. For an arbitrary regular embedding $X \to Y$, the pullback map can be reduced to this example via the deformation to the normal cone. Namely, the pullback $CH_iY \to CH_iX$ sends by definition a subvariety $V \subset Y$ to the pullback $CH_iN_{X/Y} \to CH_iX$ we have just defined along the zero-section inclusion $X \into N_{X/Y}$ of the normal cone $C_{X\cap V}V$ of $V\cap X$ in $V$.\\
To prove Theorem {lemma4.2} let us start with the fundamental example of the zero-section embedding $X\into E$ of a vector bundle $E$ over a smooth quasiprojective scheme $X$. Since $E$ is embedded in its associated projective bundle, it is also quasiprojective. As in \cite{totaro}, we remark that since both pullback maps $CH_iE \to CH_{i-d}X$ and $\hMU^{BM,\et}_{2i}(E)\otimes_{\hMU_{\ast}} \Zell \to \hMU^{BM,\et}_{2(i-d)}(X) \otimes_{\hMU_{\ast}} \Zell$ are isomorphisms, it suffices to prove that the inverse maps commute:
\begin{equation}\label{inversesquare}
\xymatrix{
CH_{i-d}X \ar[d] \ar[r] & CH_iE \ar[d]\\
\hMU^{BM,\et}_{2(i-d)}(X)\otimes_{\hMU_{\ast}} \Zell \ar[r] & \hMU^{BM,\et}_{2i}(E)\otimes_{\hMU_{\ast}} \Zell.}\end{equation}
The map on the top row sends sends a subvariety $Z \subset X$ to the subvariety $E|_Z \subset E$. If $\pi:Z' \to Z$ is an $\ell$-primary alteration for $Z$, then $\pi^{\ast}E|_{Z'} \to E|_Z$ is an $\ell$-primary alteration too. It is now obvious from the definition of $\clMU$ that diagram (\ref{inversesquare}) commutes.\\
For a general embedding, we have seen that the cycle of a subvariety $V \subset Y$ in $CH_iV$ is sent to the pullback of $C_{V\cap X}V \subset N_{X/Y}$ in $CH_{i-d}X$. The inclusion of $X$ in $N_{X/Y}$ has just been checked. Hence to prove the theorem, it remains to show that the pullback of $\clMU(V)\in \hMU^{\et}_{2i}(Y)\otimes_{\hMU^{\ast}} \Zell$ to $X$ is equal to the pullback of $\clMU(C_{V\cap X}V)\in \hMU^{\et}_{2i}(N_{X/Y})\otimes_{\hMU^{\ast}} \Zell$ to $X$.\\
This follows from the deformation to the normal cone of regular embeddings, see \cite{fulton} chapter 5. For a closed subscheme $X\subset Y$, there is a scheme $M_XY$ together with a closed embedding of $X\times \Pro^1$ in $M_XY$ and a flat morphism $\rho: M_XY \to \Pro^1$ such that
$$\xymatrix{
X\times \Pro^1 \ar[dr] \ar[r] & M_XY \ar[d]\\
 & \Pro^1}$$
commutes. Moreover, for $t\in \Pro^1 -\{ \infty \}=\A^1$, we have $\rho^{-1}(t)\cong Y$ and the embedding $X\subset \rho^{-1}(t)$ is the given embedding $X\subset Y$, and over $\infty \in \Pro^1$, the embedding $X\subset \rho^{-1}(\infty)$ is the zero-section embedding of $X$ into the normal cone $C_XY$ of $X$ in $Y$. Since $X$ is smooth over $k$, $\rho$ is a smooth morphism and $M_XY$ is smooth. It is also quasiprojective over $k$ since it is an open subscheme of the blowup of $Y\times \Pro^1$ along $X\times 0$.\\ 
If $V$ is a subvariety of $Y$ of dimension $i$, we construct in the same way a scheme $M_{X\cap V}V$ which is a subvariety of $M_XY$. We want to show that $\clMU(V)\in \hMU^{BM,\et}_{2i}(Y)\otimes_{\hMU_{\ast}}\Zell$ is the pullback of $\clMU(M_{X\cap V}V) \in \hMU^{BM,\et}_{2(i+1)}(M_XY)\otimes_{\hMU_{\ast}}\Zell$ along the regular embedding $Y \into M_XY$ and that $\clMU(C_{X\cap V}V)$ is also the pullback of $\clMU(M_{X\cap V}V)$ along the regular embedding $N_{X/Y}\into M_XY$. The commutative diagram
$$\xymatrix{
Y \ar[r] & M_XY & \ar[l] N_{X/Y}\\
X \ar[u] \ar[r] & X\times \Pro^1 \ar[u] &\ar[l] X \ar[u]}$$
then implies that the pullback of $\clMU(V)$ and of $\clMU(C_{X\cap V}V)$ to $X$ are both the pullback of the same element $\clMU(M_{X\cap V}V)$ along the map $X \to X\times \Pro^1 \to M_XY$.\\
Hence it suffices to prove the following lemma.
\begin{lemma}\label{lemma4.3}
Let $W$ be an $n$-dimensional quasiprojective smooth variety over $k$, $T$ a smooth curve, $f:W\to T$ a nonconstant flat morphism, $t\in T$. Let $Z$ be a closed subvariety of $W$ of dimension $i$ and let $g=f_{|Z}:Z \to T$ be the restriction of $f$ to $Z$. Then the pullback of the class of $Z$ in $\hMU^{BM,\et}_{2i}(W)\otimes_{\hMU_{\ast}} \Zell$ equals the class of the cycle associated to the scheme $g^{-1}(t)$ in $\hMU^{BM,\et}_{2i-2}(f^{-1}(t))\otimes_{\hMU_{\ast}} \Zell$.
\end{lemma}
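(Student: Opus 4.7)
First I will handle the trivial case where $g = f|_Z$ is constant: if $g\equiv s \neq t$ both sides are zero, while if $g\equiv t$ the $(i{-}1)$-dimensional cycle of $g^{-1}(t) = Z$ is zero, and $f^{*}[Z]$ also vanishes because the excess normal bundle $N_{W_t/W}|_Z \cong f^{*}T_tT$ is trivial and its $\hMU$-Euler class therefore zero. Henceforth I assume $g$ is dominant. Applying Gabber's Theorem \ref{gabber} to $\ovZ$ with nowhere-dense closed subset $\ovZ\cap \oW_t \cup \ovZ_{\mathrm{sing}}$ provides an $\ell$-primary alteration $\pi : Z' \to \ovZ$ of degree $d$ with $Z'$ smooth projective and $\pi^{-1}(\ovZ\cap \oW_t)$ the support of a strict normal crossings divisor on $Z'$. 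Since $Z'$ is integral and $T$ a smooth curve, $Z'\to T$ is automatically flat, so the scheme-theoretic fibre $Z'_t := Z'\times_T\{t\}$ is an effective Cartier divisor on $Z'$ whose irreducible components $Z'_{t,j}$ are smooth of dimension $i{-}1$ and enter with well-defined multiplicities $m_j$; by construction $\clMU(Z) = \tfrac{1}{d}\pi_{*}[Z']$.

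Next I will exploit the Cartesian square
\begin{equation*}
\xymatrix{
Z'_t \ar[r]^{i'} \ar[d]_{\pi_t} & Z' \ar[d]^{\pi} \\
W_t \ar[r]^{i} & W
}
\end{equation*}
together with the naturality of the Thom class orienting $\hMU_{\et}$ (Theorem \ref{orientation}, Proposition \ref{pullback}): the pullback of a Cartier divisor is a Cartier divisor, so $\pi^{*}\omega_{W_t/W} = \omega_{Z'_t/Z'}$, yielding the base-change identity $i^{*}\pi_{*} = \pi_{t*}\, i'^{*}$ for the oriented Borel--Moore theory. Hence $f^{*}\clMU(Z) = \tfrac{1}{d}\pi_{t*}(i'^{*}[Z'])$.

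The heart of the argument is the self-intersection identity
\begin{equation*}
i'^{*}[Z'] \;=\; \sum_{j} m_j\,\clMU_{Z'_t}(Z'_{t,j}) \qquad \text{in }\ \hMU^{BM,\et}_{2i-2}(Z'_t)\otimes_{\hMU_{*}}\Zell .
\end{equation*}
My plan is to reduce this to a statement in $\hMU^{2}_{\et}(Z')\otimes_{\hMU^{*}}\Zell$ by pushing forward along $i'$: the left-hand side becomes $c_1^{\hMU}(\mathcal{O}_{Z'}(Z'_t))\cap [Z']$ by the self-intersection formula for the Cartier divisor, while the right-hand side, via Lemma \ref{pushforwards}, becomes the cycle class of $\sum_j m_j Z'_{t,j}$ in $\hMU^{BM,\et}_{2i-2}(Z')\otimes_{\hMU_{*}}\Zell$. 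Poincar\'e duality for the smooth projective $Z'$ (Theorem \ref{duality}) sends both to elements of $\hMU^{2}_{\et}(Z')\otimes_{\hMU^{*}}\Zell$, and Quillen's Theorem \ref{quillenstheorem} combined with Proposition \ref{finiteness} makes the comparison map $\hMU^{2}_{\et}(Z')\otimes_{\hMU^{*}}\Zell \to H^{2}_{\et}(Z';\Zell)$ an isomorphism. In \'etale cohomology the identity $c_1^H(\mathcal{O}(D)) = \clH(D) = \sum_j m_j[Z'_{t,j}]_H$ for an effective Cartier divisor on a smooth scheme is the classical Kummer-sequence computation; the corresponding statement before pushforward to $Z'$ can then be obtained from a Mayer--Vietoris argument applied to the SNC decomposition, which is why choosing an alteration with SNC fibre (rather than an arbitrary $\ell$-primary alteration) is essential.

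Feeding this back through the base change yields $f^{*}\clMU(Z) = \tfrac{1}{d}\sum_j m_j\,(\pi\circ i'_j)_{*}\clMU(Z'_{t,j})$. The degree relation $\pi_{*}[Z'] = d\,[Z]$, combined with flat base change of cycles along $\{t\}\hookrightarrow T$ (applicable since $Z'\to T$ is flat), supplies the cycle identity $\sum_j m_j\,(\pi\circ i'_j)_{*}[Z'_{t,j}] = d\,[g^{-1}(t)]$ on $W_t$, which Lemma \ref{pushforwards} upgrades to $f^{*}\clMU(Z) = \clMU(g^{-1}(t))$, as required. The main technical obstacle is the self-intersection identity in the middle step: passing to the quotient $\otimes_{\hMU^{*}}\Zell$ is essential to kill the formal-group-law correction terms that would otherwise prevent $c_1^{\hMU}$ from being additive on $\mathcal{O}\bigl(\sum m_j D_j\bigr)$, and Poincar\'e duality for the smooth projective alteration $Z'$ together with Quillen's theorem is what lets one verify the remaining identity in $\ell$-adic \'etale cohomology, where it is standard.
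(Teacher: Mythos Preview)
Your overall architecture matches the paper's: pass to an $\ell$-primary smooth alteration $\pi:Z'\to \ovZ$, use the projection/base-change formula to reduce the statement on $W$ to the corresponding statement for the smooth projective $Z'$, and then verify the remaining identity by reducing to \'etale Borel--Moore homology in top degree. The difference lies in how you carry out this last reduction, and your version is both more elaborate than necessary and contains a step that is not justified as written.

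Your ``self-intersection identity'' is meant to hold in $\hMU^{BM,\et}_{2i-2}(Z'_t)\otimes_{\hMU_{\ast}}\Zell$, i.e.\ on the fibre itself. You propose to verify it by pushing forward along $i'_{\ast}$ to $Z'$, using Poincar\'e duality for $Z'$ and Theorem~\ref{quillenstheorem} to pass to $H^{2}_{\et}(Z';\Zell)$, and then to ``lift back'' to $Z'_t$ via a Mayer--Vietoris argument on the SNC decomposition. The problem is that $i'_{\ast}$ need not be injective in degree $2i-2$: distinct components $Z'_{t,j}$ can have linearly dependent cycle classes in $H^{2}_{\et}(Z';\Zell)$, so equality after pushforward does not give equality on $Z'_t$. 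The Mayer--Vietoris remark does not repair this; knowing the SNC structure tells you $H^{BM,\et}_{2i-2}(Z'_t;\Zell)\cong\bigoplus_j\Zell$, but not that $i'_{\ast}$ separates these summands.

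The paper avoids this detour entirely. After the projection-formula reduction (which it phrases as $\pi_{\ast}\bigl((g\pi)^{\ast}\omega_{t,T}\cap[Z']\bigr)=g^{\ast}\omega_{t,T}\cap\clMU(Z)$ together with Fulton's cycle identity $\pi_{\ast}[(g\pi)^{-1}(t)]=[g^{-1}(t)]$), one is left with proving $(g\pi)^{\ast}\omega_{t,T}\cap[Z']=\clMU((g\pi)^{-1}(t))$ in $\hMU^{BM,\et}_{2i-2}\bigl((g\pi)^{-1}(t)\bigr)\otimes_{\hMU_{\ast}}\Zell$. Since $(g\pi)^{-1}(t)$ has dimension $i-1$, this is the \emph{top} Borel--Moore degree, and Theorems~\ref{quillenstheorem} and \ref{duality} (or equivalently the Atiyah--Hirzebruch spectral sequence in top degree) give $\hMU^{BM,\et}_{2i-2}\bigl((g\pi)^{-1}(t)\bigr)\otimes_{\hMU_{\ast}}\Zell\cong H^{BM,\et}_{2i-2}\bigl((g\pi)^{-1}(t);\Zell\bigr)$ directly on the fibre, without any pushforward to $Z'$. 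The required identity in \'etale Borel--Moore homology is then exactly Friedlander's Proposition~17.4. In particular no SNC hypothesis on the alteration is needed, and no separate treatment of the constant case is required either: if $g\pi$ is constant equal to $t$ both sides of the reduced identity are zero in top degree for dimension reasons.
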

\begin{proof}
Since $f$ is flat and the embedding of $\{t\} \into T$ is regular, the inclusion of the subscheme $e_t:f^{-1}(t) \subset W$ is a codimension-one regular embedding of smooth schemes. By Proposition \ref{pullback}, the pullback $e_t^{\ast}: \hMU^{BM,\et}_{\ast}(W) \to \hMU^{BM,\et}_{\ast-2}(f^{-1}(t))$ is defined by cap product with the orientation class $\omega_{f^{-1}(t),W}\in \hMU^2_{\et}(W,W-f^{-1}(t))$ induced by the Thom class of the normal bundle and homotopy purity. By naturality of Thom classes, we have 
$\omega_{f^{-1}(t),W} =f^{\ast}\omega_{t,T}$, where $\omega_{t,T} \in \hMU^2_{\et}(T,T-t)$ is the orientation class of the embedding $\{t\} \into T$. Hence we have to show:
$$f^{\ast}\omega_{t,T} \cap \clMU(Z)=\clMU(g^{-1}(t))~\mathrm{in}~\hMU^{BM,\et}_{2i-2}(f^{-1}(t)) \otimes_{\hMU_{\ast}} \Zell.$$
Since the class of $g^{-1}(t)$ is defined as the pushforward of the associated fundamental class along $g^{-1}(t) \into f^{-1}(t)$, it suffices to prove the formula  
$$g^{\ast}\omega_{t,T} \cap \clMU(Z)=\clMU(g^{-1}(t))~\mathrm{in}~\hMU^{BM,\et}_{2i-2}(g^{-1}(t)) \otimes_{\hMU_{\ast}} \Zell.$$
So let $\pi:Z' \to Z$ be a smooth alteration of $Z$ of degree $d$ prime to $\ell$. By \cite{fulton}, p. 34, we have the identity of cycles on the scheme $g^{-1}(t)$
$$\pi_{\ast}((g\pi)^{-1})(t)=g^{-1}(t).$$
Using the projection formula and the definition of $\clMU(Z)$, we obtain the following identification in $\hMU^{BM,\et}_{2i-2}(g^{-1}(t)) \otimes_{\hMU_{\ast}} \Zell$
$$\begin{array}{rcl}
\frac{1}{d}\pi_{\ast}((g\pi)^{\ast}\omega_{t,T} \cap [Z']) &= & \frac{1}{d}\pi_{\ast}(\pi^{\ast}g^{\ast}\omega_{\{t\},T} \cap [Z'])\\
 & = & g^{\ast}\omega_{\{t\},T} \cap \frac{1}{d}\pi_{\ast}[Z']\\
 & = & g^{\ast}\omega_{\{t\},T} \cap \clMU(Z).
\end{array}$$
Hence it suffices to show
$$(g\pi)^{\ast}\omega_{t,T} \cap [Z']=[(g\pi)^{-1}(t)]~\mathrm{in}~\hMU^{BM,\et}_{2i-2}((g\pi)^{-1}(t)) \otimes_{\hMU_{\ast}} \Zell.$$
So replacing $g$ by $g\pi$, we may assume $Z$ is projective and smooth of dimension $i$ over $k$.\\
For such a $Z$, we know that $\hMU^{BM,\et}_{2i-2}(g^{-1}(t)) \otimes_{\hMU_{\ast}} \Zell$ is isomorphic to the Borel-Moore homology $H^{BM,\et}_{2i-2}(g^{-1}(t), \Zell)$ by Theorems \ref{quillenstheorem} and \ref{duality}. Hence it suffices to prove the above formula in \'etale homology $H^{BM,\et}_{2i-2}(g^{-1}(t), \Zell)$ which has been done by Friedlander in \cite{fried}, Proposition 17.4. This proves the lemma and Theorem \ref{lemma4.2}.
\end{proof}
\section{Examples via Godeaux-Serre varieties} 
Let $k$ be an algebraically closed field and $\ell$ a prime different from the characteristic of $k$. Let $G$ be a finite $\ell$-group. For any given integer $r\geq 1$, Serre \cite{serre} has shown the existence of a representation $V$ of rank $n+1$ over $k$ and a smooth variety $Y \subset \Pro(V)$ over $k$ such that:\\
(a) $G$ acts without fixed points on $Y$;\\
(b) $Y$ is a complete intersection of a number of hypersurfaces of $\Pro(V)$ of degree $d$ which are smooth on $Y$ and intersect transversally;\\
(c) $\dim_k Y=r$;\\
(d) $X:=Y/G$ is a smooth projective variety over $k$;\\
(e) we observe that (b) and the weak Lefschetz theorem imply that the $\ell$-adic \'etale cohomology of $Y$ is isomorphic to the one of $\Pro(V)$ up to dimension $r-1$.\\  
We will apply Serre's construction in the following two cases.
\subsection{The examples of Atiyah and Hirzebruch}
Let us first review the argument of Atiyah and Hirzebruch in the light of \'etale homotopy theory. We can formulate the following analogue of Proposition 6.6 of \cite{atiyahhirzebruch}.
\begin{prop}\label{prop6.6}
Let $k$ and $G$ be as above. For any positive integer $n>2$, there is a smooth projective variety $X$ such that the continuous $\Zell$-cohomology of $\hEt X$ is equal to the cohomology of the product of Eilenberg-MacLane spaces $K(\Zell,2)\times BG$ in $\hSh$ up to dimension $n$. In particular, $H^{\ast}(G;\Zell)$ is a direct factor of $H_{\et}^{\ast}(X;\Zell)$ up to dimension $n$. 
\end{prop}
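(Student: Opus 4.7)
The plan is to adapt the original argument for Proposition 6.6 of \cite{atiyahhirzebruch} to the \'etale setting, using Serre's Godeaux construction together with the Serre spectral sequence for \'etale homotopy types.

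First I would invoke Serre's construction with the parameter $r$ chosen strictly greater than $n$. By property (e), $H_{\et}^q(Y;\Zell) \cong H_{\et}^q(\Pro(V);\Zell)$ for $q \leq r-1$, so in the range $q \leq n$ the continuous $\Zell$-cohomology of $\hEt Y$ matches that of $K(\Zell,2)$, with generator $\xi := c_1(\Oh(1)|_Y)$ in degree $2$. Moreover the free parameters in Serre's construction may be chosen so that the integer
\[ a := (\dim V - 1 - r)\,d - \dim V \]
is a unit in $\Zell$.

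The quotient $\pi:Y \to X$ is a Galois \'etale $G$-cover, which induces a fiber sequence $\hEt Y \to \hEt X \to BG$ in $\hSh$ with respect to the $\Zl$-model structure. The action of $G$ on $H^{\ast}_{\et}(Y;\Zell)$ is trivial in the range of interest, since $G$ fixes $\Oh(1)|_Y$ and hence $\xi$, and $H^{\ast}_{\et}(Y;\Zell)$ is generated by $\xi$ through degree $n$. The associated Serre spectral sequence
\[ E_2^{p,q} = H^p(BG;\Zell) \otimes H_{\et}^q(Y;\Zell) \Longrightarrow H_{\et}^{p+q}(X;\Zell) \]
therefore has its $E_2$-page, in total degree $\leq n$, equal to the $E_2$-page for the product fibration of $K(\Zell,2)\times BG$ over $BG$.

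The key step is degeneration at $E_2$ in this range. By multiplicativity it suffices to show that $\xi$ is a permanent cycle. Since $\pi$ is \'etale, $K_Y \cong \pi^{\ast} K_X$, and adjunction in $\Pro(V)$ gives $c_1(K_Y) = a\,\xi$. As $a$ is a unit in $\Zell$, this exhibits $\xi$ as lying in the image of $\pi^{\ast}:H^2_{\et}(X;\Zell) \to H^2_{\et}(Y;\Zell)$, and hence as a permanent cycle. All differentials on the subring generated by $\xi$ and by classes pulled back from $BG$ therefore vanish, forcing collapse through total degree $n$. Combined with the multiplicative structure of the spectral sequence, this yields a ring isomorphism $H_{\et}^{\ast}(X;\Zell) \cong H^{\ast}(BG;\Zell) \otimes H^{\ast}(K(\Zell,2);\Zell)$ in degrees $\leq n$, matching the K\"unneth decomposition of $H^{\ast}(K(\Zell,2)\times BG;\Zell)$. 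The direct summand assertion for $H^{\ast}(G;\Zell)$ is then immediate.

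The main obstacle is establishing the Serre spectral sequence for the profinite fiber sequence $\hEt Y \to \hEt X \to BG$ in $\hSh$ and identifying its $E_2$-term with continuous cohomology of $BG$ with $\Zell$-coefficients; this amounts to combining Friedlander's \'etale Serre spectral sequence from \cite{fried} with the profinite rigidification of \cite{profinhom}, with convergence guaranteed by the finiteness in Proposition \ref{finiteness}.
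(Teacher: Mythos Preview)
Your argument is correct, but the paper reaches the same conclusion by a more direct route at the one nontrivial step, namely showing that the generator $\xi=c_1(\Oh(1)|_Y)\in H^2_{\et}(Y;\Zell)$ lies in the image of $\pi^{\ast}$. Instead of invoking adjunction and the relation $K_Y\cong\pi^{\ast}K_X$, the paper uses that $V$ is a genuine linear $G$-representation (not merely a projective one), so $\Oh(1)$ on $\Pro(V)$ carries a canonical $G$-linearization and $\eta=\Oh(1)|_Y$ descends to a line bundle $\xi$ on $X=Y/G$ with $\pi^{\ast}\xi=\eta$. Taking $u=c_1(\xi)$ gives a map $f:\hEt X\to K(\Zell,2)$, and together with the classifying map $g:\hEt X\to BG$ one has an explicit map $(f,g):\hEt X\to K(\Zell,2)\times BG$; the paper then simply compares the induced map on $G$-covers $(f\circ\pi,\bar g):\hEt Y\to K(\Zell,2)\times EG$, which is a cohomology isomorphism in the range by property~(e). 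This avoids your extra constraint on the Serre parameters needed to make $a=(\dim V-1-r)d-\dim V$ a unit in $\Zell$, and it sidesteps the filtration/extension issues that a bare spectral-sequence degeneration argument would otherwise leave you with when asserting a ring isomorphism. Your Serre spectral sequence framework and the paper's covering-space comparison are essentially equivalent once a lift of $\xi$ to $H^2_{\et}(X;\Zell)$ is in hand; the paper's descent of $\Oh(1)$ is just the cheapest way to produce that lift.
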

\begin{proof}
The proof is the analogue of the one in \cite{atiyahhirzebruch}. The remarkable thing is that it can be reformulated in terms of \'etale homotopy. We choose $r-1\geq n$ and define $Y\subset \Pro(V)$ and $X=Y/G$ as above. Let $\Oh(1) \to \Pro(V)$ be the canonical quotient line bundle on $\Pro(V)$ and let $\eta \to Y$ be its pullback to $Y$. Since $G$ acts on these bundles, there is a bundle $\xi$ on $X$ such that $\eta=\pi^{\ast}\xi$, where $\pi:Y \to X$ is the covering map. 
We denote by $u\in H^2(\hEt X;\Zell)$ the first Chern class of $\xi$ and set $v:=\pi^{\ast}u \in H^2(\hEt Y;\Zell)$. Let $f: \hEt X\to K(\Zell,2)$ be a map in $\hHh$ representing $u$, $g:\hEt X\to BG$ be the map induced by the principal $G$-fibration $\hEt Y \to \hEt X$ and let $\bar{g}: \hEt Y\to EG$ be the covering map of $g$ in $\hHh$, see \cite{ensprofin} and \cite{profinhom}. Then $(f\circ \pi,\bar{g}):\hEt Y \to K(\Zell,2)\times EG$ is the covering map of $(f,g):\hEt X\to K(\Zell,2)\times BG$ in $\hHh$. Since the cohomology of $EG$ is trivial and since the cohomology of $\hEt \Pro(V)$ is isomorphic to the one of $K(\Zell,2)$ up to dimension $r-1$, (e) above implies that $(f\circ \pi,\bar{g})$ induces an isomorphism in $\Zell$-cohomology up to dimension $r-1$. Hence $(f,g):\hEt X\to K(\Zell,2)\times BG$ induces an isomorphism in continuous $\Zell$-cohomology up to dimension $r-1$. 
\end{proof}
Now the same proof as in \cite{atiyahhirzebruch}, Proposition 6.7, or in \cite{ctsz}, Th\'eor\`eme 2.1, shows the following proposition. 
\begin{prop}\label{prop6.7}
Let $G$ be the group $(\Zl)^3$. There is a cohomology class $y\in H^{4}(G;\Zell)$ of order $\ell$ and a cohomology operation of odd degree that does not vanish on $y$.
\end{prop}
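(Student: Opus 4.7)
The plan is to adapt the classical Atiyah--Hirzebruch computation, which involves only the structure of the $\F_{\ell}$-cohomology ring of $G = (\Zl)^{3}$ and the action of the mod-$\ell$ Steenrod algebra; since $\Zell$ is torsion-free, all relevant $\ell$-torsion in $H^{\ast}(G;\Zell)$ is detected by the integral Bockstein sequence associated with $0 \to \Zell \stackrel{\ell}{\to} \Zell \to \F_{\ell} \to 0$.

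First I would record that, for $\ell$ odd,
$$H^{\ast}(G;\F_{\ell}) \cong \Lambda(x_{1},x_{2},x_{3}) \otimes \F_{\ell}[y_{1},y_{2},y_{3}], \quad |x_{i}|=1,\ |y_{i}|=2,$$
with $y_{i} = \beta(x_{i})$, while for $\ell = 2$ one has $H^{\ast}(G;\F_{2}) = \F_{2}[x_{1},x_{2},x_{3}]$ with $|x_{i}|=1$. I would then define the integral class
$$y := \beta(x_{1}x_{2}x_{3}) \in H^{4}(G;\Zell),$$
where $\beta$ denotes the integral Bockstein above. By construction $\ell y = 0$, and $y \neq 0$ since its mod-$\ell$ reduction equals $y_{1}x_{2}x_{3} - x_{1}y_{2}x_{3} + x_{1}x_{2}y_{3}$ for $\ell$ odd (respectively $x_{1}^{2}x_{2}x_{3} + x_{1}x_{2}^{2}x_{3} + x_{1}x_{2}x_{3}^{2}$ for $\ell = 2$, via $Sq^{1}$), which is plainly nonzero in the explicit ring above.

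The key step is to exhibit an odd-degree primary operation that does not vanish on $y$. For $\ell$ odd, I take the Milnor primitive $Q_{1} = P^{1}\beta - \beta P^{1}$, of odd degree $2\ell - 1$. It is a graded derivation on mod-$\ell$ cohomology satisfying $Q_{1}(x_{i}) = y_{i}^{\ell}$ and $Q_{1}(y_{i}) = 0$, so an application of the Leibniz rule to $\bar y$ produces a linear combination containing the monomial $y_{1}^{\ell}y_{2}x_{3}$ with nonzero coefficient; linear independence of such monomials in the polynomial-exterior algebra forces $Q_{1}(\bar y) \neq 0$. For $\ell = 2$ I would instead apply $Sq^{3}$ to $\bar y$; using the Cartan formula and the instability relations $Sq^{i}x_{j} = 0$ for $i \geq 2$ and $Sq^{1}x_{j} = x_{j}^{2}$, a direct expansion yields a sum of pairwise distinct monomials such as $x_{1}^{4}x_{2}^{2}x_{3}$, so $Sq^{3}(\bar y) \neq 0$.

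I do not expect a serious obstacle: the only subtlety is that the cohomology ring of $G$ has a genuinely different shape at $\ell = 2$ versus at odd $\ell$, so the final monomial computation must be done in two parallel cases. In both cases the argument reproduces verbatim the calculation of \cite{atiyahhirzebruch}, Prop.~6.7, and of \cite{ctsz}, Th\'eor\`eme 2.1.
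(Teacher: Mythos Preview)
Your proposal is correct and follows the same route as the paper, which simply defers to \cite{atiyahhirzebruch}, Proposition~6.7, and \cite{ctsz}, Th\'eor\`eme~2.1, and records afterward that the relevant operation is $Sq^{3}$ for $\ell=2$ and $\beta P^{1}$ for odd~$\ell$. Your only deviation is using the Milnor primitive $Q_{1}=P^{1}\beta-\beta P^{1}$ in place of $\beta P^{1}$; since $\bar y$ lies in the image of the mod-$\ell$ Bockstein, $P^{1}\beta(\bar y)=0$ and the two operations agree on $\bar y$ up to sign, so this is a cosmetic difference that just makes the Leibniz-rule computation a bit cleaner.
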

For $\ell=2$, the Steenrod operation of Proposition \ref{prop6.7} is the Steenrod square $Sq^3$ of degree $3$. For $\ell$ odd, the operation is $\beta\Ph^1$ of degree $2\ell-1$, where we denote by $\Ph^1:H^i(G;\Zl) \to H^{i+2(\ell-1)}(G;\Zl)$ the first $\ell$th power operation and by $\beta:H^i(G;\Zl)\to H^{i+1}(G;\Zl)$ the Bockstein operator induced by the short exact sequence 
$$0\to \Zell \to \Zell \to \Zl \to 0.$$
Since all odd degree cohomology operations vanish on the image image of the map $\theta:\hMU_{\et}^{\ast}(X)\otimes_{\hMU^{\ast}} \Zell \to H_{\et}^{\ast}(X;\Zell)$, Propositions \ref{prop6.6} and \ref{prop6.7} show that there is a cohomology class in $H_{\et}^{4}(Y/G;\Zell)$ that is not algebraic, which proves part (b) of Corollary \ref{obstruction}.\\
Steenrod operations on $H_{\et}^{\ast}(X;\Zl)$ have been constructed by Raynaud in \cite{raynaud}. They can be also constructed using \'etale homotopy theory. For any finite groups $\pi$ and $G$ and any positive integers $n$ and $m$, there is a bijection between the set of cohomology operations $H^n(-;\pi) \to H^m(-;G)$ of continuous chomology of profinite spaces and the set of maps $\Hom_{\hHh}(K(\pi,n),K(G,m))$ in $\hHh$. But since $\pi$ and $G$ are finite groups, their Eilenberg MacLane spaces are simplicial finite sets. So we have 
$$\Hom_{\hHh}(K(\pi,n),K(G,m))=\Hom_{\Hh}(K(\pi,n),K(G,m)).$$
Applied to $\hEt X$ of a scheme, this defines all primary cohomology operations on the \'etale cohomology of $X$. But in $\hSh$ also higher cohomology operations can be constructed just as in $\Sh$. This shows that also higher operations exist for \'etale cohomology via the \'etale realization functor.\\ 
Finally, we remark that Soul\'e and Voisin showed in \cite{soulevoisin}, Theorem 1, that Totaro's topological obstruction can only detect non-algebraic torsion cohomology classes whose order is small relative to the dimension of the variety. Their proof applies in the same way in our situation in positive characteristic. Hence with this method we cannot expect to find non-algebraic classes with an arbitrary order compared to the dimension.
\subsection{Nontrivial elements in the Griffiths group}
Totaro has analyzed the kernel of the map $MU^{\ast} X\otimes_{MU^{\ast}} \Z/2 \to H^{\ast}(X,\Z/2)$ over $\C$ and constructed elements in the kernel that are in the image of the cycle map \cite{totaro}. By transferring the argument to \'etale homotopy theory, we now check that these varieties  provide examples over any algebraically closed field of characteristic $\neq 2$ such that the classical cycle map to \'etale cohomology is not injective. They correspond to the examples of \cite{totaro}, Theorems 7.1 and 7.2. We remind the reader of the comment in the introduction for other examples of the non-injectivity of the map in (a) below.
\begin{prop}
Let $k$ be an algebraically closed field of characteristic $\neq 2$.\\ 
{\rm (a)} There is a smooth projective variety over $k$ of dimension $7$ such that the map $CH^2X/2 \to H_{\et}^4(X,\Z/2)$ is not injective.\\
{\rm (b)} There is a smooth projective variety over $k$ of dimension $15$ and an element $\alpha \in CH^3X$ such that $2\alpha=0$ and $\alpha$ is homologically but not algebraically equivalent to zero.  
\end{prop}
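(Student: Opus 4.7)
The plan is to transfer Totaro's examples \cite{totaro}, Theorems 7.1 and 7.2, to an algebraically closed field $k$ of characteristic $\neq 2$ using the Godeaux-Serre varieties recalled at the beginning of this section together with the étale cycle map $\clMU$ of Theorem \ref{maintheorem}. Throughout I fix $\ell = 2$ and take $G$ a finite $2$-group exactly as in Totaro's two theorems. Properties (a)--(d) produce smooth projective varieties $X = Y/G$ over $k$ of dimension $7$ for part (a) and $15$ for part (b). The bridge from Totaro's complex-analytic setting to ours is Proposition \ref{prop6.6}: in degrees $\leq r-1$ the continuous $\Zell$-cohomology of $\hEt X$ is canonically isomorphic to that of $K(\Zell,2) \times BG$. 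Combined with the identification $\hMU^{\ast} \cong MU^{\ast}\otimes_{\Z}\Zell$ and Theorem \ref{quillenstheorem}, this yields low-degree isomorphisms between $\hMU_{\et}^{\ast}(X) \otimes_{\hMU^{\ast}} \Z/2$ and Totaro's classical mod-$2$ $MU$-cohomology of the relevant truncation of $K(\Z/2,2)\times BG$.

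For part (a), I take the cycle $\alpha \in CH^2 X$ that Totaro constructs in the proof of his Theorem 7.1, built from Chern classes of $G$-equivariant bundles on $Y$ descended to $X$. Totaro shows that $\alpha$ has nonzero class in $MU^4 X\otimes_{MU^{\ast}}\Z/2$ (detected for instance by $d_3$ of the Atiyah-Hirzebruch spectral sequence, equivalently by a suitable odd-degree primary cohomology operation), whereas its image in $H^4(X;\Z/2)$ vanishes. Corollary \ref{products} and Lemma \ref{pushforwards} show that $\clMU(\alpha)$ is reproduced by the étale realization of this same construction. The low-degree comparison of the previous paragraph then forces $\clMU(\alpha) \in \hMU_{\et}^4(X)\otimes_{\hMU^{\ast}}\Zell$ to be nontrivial modulo $2$, while $\clH(\alpha) = \theta \circ \clMU(\alpha) = 0$ in $H_{\et}^4(X;\Z/2)$. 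The first condition guarantees that the reduction of $\alpha$ in $CH^2 X/2$ is nonzero, and the second is exactly vanishing in $H_{\et}^4(X;\Z/2)$, proving the asserted non-injectivity.

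For part (b), I take Totaro's $15$-dimensional Godeaux-Serre variety and its $2$-torsion cycle $\alpha \in CH^3 X$. By Theorem \ref{maintheorem}, $\clMU$ factors through algebraic equivalence, so it is enough to show that $\clMU(\alpha) \neq 0$ in $\hMU_{\et}^6(X)\otimes_{\hMU^{\ast}}\Zell$ while $\clH(\alpha) = 0$ in $H_{\et}^6(X;\Zell)$; these are Totaro's statements translated through the same comparison isomorphism. Nonvanishing of $\clMU(\alpha)$ rules out algebraic triviality of $\alpha$, whereas vanishing of $\clH(\alpha)$ gives homological triviality.

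The only place any real work is required is matching Totaro's explicit kernel elements of the map $MU^{\ast} X\otimes_{MU^{\ast}}\Z/2 \to H^{\ast}(X;\Z/2)$ with their étale counterparts. By Proposition \ref{finiteness}, $\hEt X$ has the homotopy type of a finite simplicial set in the $\Zl$-model structure, and together with Theorem \ref{quillenstheorem} and Proposition \ref{prop6.6} this makes the Atiyah-Hirzebruch spectral sequence for $\hMU_{\et}^{\ast}(X)$ agree with the classical Atiyah-Hirzebruch spectral sequence of the truncated $K(\Zell,2)\times BG$ in all degrees $\leq r-1$, which comfortably covers the dimensions $4$ and $6$ relevant above. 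Once this comparison is installed, Totaro's detection arguments apply verbatim and complete the proof of Corollary \ref{obstruction}(c).
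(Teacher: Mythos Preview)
Your proposal is correct and follows essentially the same route as the paper: transfer Totaro's examples via Godeaux--Serre varieties, use Proposition~\ref{prop6.6} to identify the low-degree cohomology of $\hEt X$ with that of $K(\Zell,2)\times BG$, and invoke the finiteness comparison of Theorem~\ref{quillenstheorem} to match the profinite and classical $MU$-theory. The paper is slightly more explicit in two places you gloss over: it names the groups $G=D(2)$ and $G=D(2)\times\Z/2$ and works with the finite skeleton $\sk_{r-1}BG$ directly (yielding the clean identification $\hMU^{\ast}(\sk_{r-1}BG)\otimes_{\hMU^{\ast}}\Z/2\cong MU^{\ast}(\sk_{r-1}BG)\otimes_{MU^{\ast}}\Z/2$), and it checks that the representations $A,B$ of $D(2)$ come from $SO(4,k)$ and hence are defined over $k$, so that the cycle $c_2A-c_2B$ genuinely lives in $CH^2X$ over the given field.
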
 
\begin{proof}
We equip $\hSh$ with the $\Z/2$-model structure. For both varieties we apply Serre's construction to a suitable $2$-group $G$ to get a variety $Y$ of dimension $r$ with a $G$-action as above. By Proposition \ref{prop6.6}, $\hEt Y/G \to BG\times K(\Zell,2)$ induces an isomorphism in $\Zl$-cohomology up to dimension $r-1$. Hence $\hEt Y/G$ contains the $r-1$-skeleton $\sk_{r-1}BG$ of $BG$ up to weak equivalence. Thus it suffices to find elements in $\hMU^{\ast}(\sk_{r-1}BG)\otimes_{\hMU^{\ast}}\Z/2$. Note that since $\sk_{r-1}BG$ is a finite simplicial set, the proof of Theorem \ref{quillenstheorem} implies 
\begin{equation}\label{BGisom}
\hMU^{\ast}(\sk_{r-1}BG)\otimes_{\hMU^{\ast}}\Z/2\cong MU^{\ast}(\sk_{r-1}BG)\otimes_{MU^{\ast}}\Z/2.\end{equation}
For (a), we take $r=8$ and let $G$ be the extra-special $2$-group $D(2)$ of order $32$ with center $\Z/2$ of section 5 of \cite{totaro}. The dimension of the associated Serre variety $X=Y/G$ is then $7$. Using identification (\ref{BGisom}), the same argument as in the proof of Theorem 7.1 of \cite{totaro} now applies. We only have to observe that the representations $A$ and $B$ of $G$, which come from representations of $SO(4,k)$ under the restriction $G\subset SO(4,k)$, can be defined over $k$. They yield $k$-vector bundles of ranks $3$ and $4$, respectively. Since Chow groups define an oriented cohomology theory in the sense of \cite{lm}, there are Chern classes of vector bundles in $CH^{\ast}X$. After taking second Chern classes, we get a nonzero cycle $c_2A-c_2B$ in $CH^2X$ over $k$. Since \'etale cobordism is also an oriented cohomology theory by Theorem \ref{orientation}, there are Chern classes of vector bundles as well. By \cite{totaro}, the image of the cycle $c_2A-c_2B$ is nonzero in $\hMU^4X\otimes_{\hMU^{\ast}} \Z/2$, but is mapped to zero in $H_{\et}^4(X,\Z/2)$. \\
For (b), we chose $r=16$ and $G$ be $D(2)\times \Z/2$, where $D(2)$ is as in (a). Again, using (\ref{BGisom}), the arguments of \cite{totaro}, Theorem 7.2, apply to get a cycle on $X$ over $k$.
\end{proof}

This finishes the proof of Corollary \ref{obstruction}.
\bibliographystyle{amsalpha}

Mathematisches Institut, Universit\"at M\"unster, Einsteinstr. 62, D-48149 M\"unster\\
E-mail address: gquick@math.uni-muenster.de\\
Homepage: www.math.uni-muenster.de/u/gquick

\end{document}